\documentclass[a4paper]{article}
\usepackage{mathrsfs}
\usepackage{latexsym,bm}
\usepackage{amssymb,amsmath,amsthm}
\usepackage[english]{babel}
\usepackage{dsfont}
\usepackage{titlesec}
\titleformat{\subsection}
{\normalfont\normalsize\itshape}
{\itshape\thesubsection}
{0.5em}
{}
\usepackage[colorlinks=true]{hyperref}
\usepackage{graphicx}
\usepackage{color}
\usepackage{appendix}
\allowdisplaybreaks[4]

\newtheorem{theorem}{Theorem}
\newtheorem{lemma}{Lemma}
\newtheorem{remark}{Remark}

\numberwithin{equation}{section}
\hypersetup{linkcolor=blue,urlcolor=red,citecolor=red}

\title{Null controllability on measurable sets for the complex cubic Ginzburg--Landau equation}
\author{Yu Xiao{\footnote{School of Mathematics and Statistics, Wuhan University, Wuhan 430072, P.R.China (xiaoyu\_math@whu.edu.cn).}}\and Can Zhang{\footnote{School of Mathematics and Statistics, Wuhan University, Wuhan 430072, P.R.China (canzhang@whu.edu.cn).}}}
\date{}

\begin{document}
\selectlanguage{english}
\maketitle

\begin{abstract}
This paper investigates the null controllability of the complex cubic Ginzburg--Landau equation with controls supported on general space-time measurable sets. Starting from a positive-measure control set, we use a slicing argument to obtain time slices of uniformly positive spatial measure. This yields uniform spectral inequalities on the slices, which are the key ingredient in constructing quantitative stabilizing feedbacks. A density-point argument provides a sequence of shrinking intervals accumulating at the density point, on which the active times have a uniform positive density. We then apply a time-iteration scheme with piecewise feedback laws and increasing decay rates. Iterating the resulting stabilization estimates shows that the closed-loop solution reaches the zero state at the selected density point. This provides a constructive approach to the local null controllability on measurable control sets for nonlinear parabolic equations.
\end{abstract}

{\bf Keywords.} Ginzburg--Landau equation, stabilization, null controllability, measurable control sets
\vskip 5pt
{\bf 2020 Mathematics Subject Classification.} 35Q56, 93B05, 93D15

\section{Introduction}
Let $\Omega$ be a bounded, connected, open set in $\mathbb{R}^d\;(d=1,2,3)$ with smooth boundary. Let $T\in (0,1)$. Throughout this paper, unless otherwise specified, the control region $\mathcal{D}$ is assumed to be a Lebesgue measurable subset of $(0,T)\times \Omega$. We consider the following complex cubic Ginzburg--Landau equation with a distributed control $u$:
\begin{equation}\label{eq-1}
\begin{cases}
\partial_t y=(1+i\alpha)\Delta y+Qy-(1+i\beta)|y|^2y+\mathcal{X}_{\mathcal{D}}u & \text{in} \;\;\; (0,T)\times \Omega, \\
y=0&\text{on}\;\; (0,T) \times \partial\Omega,\\
y(0,\cdot) = y_0 & \text{in} \;\;\; \Omega,
\end{cases}
\end{equation}
where $\alpha,\beta,Q\in \mathbb{R}$. For any measurable set $A$, $\mathcal{X}_{A}$ denotes its characteristic function.

The Ginzburg--Landau equation (GLE for short) has been widely used to model a broad range of physical phenomena, including nonlinear waves, second-order phase transitions, superfluidity, Bose--Einstein condensation, and liquid crystals (cf. \cite{AK}). It is one of the most extensively studied nonlinear equations in both physics and mathematics.

\subsection{Motivation}
The controlled problem \eqref{eq-1} has been extensively studied; see, for example, \cite{F,RZ} for controllability, \cite{LZ2} for time-optimal controls, \cite{ASK} for stabilization, and \cite{ST} for insensitizing controls. In controllability problems, the objective is to steer the state to a prescribed target at time $T$ by means of a control acting on $\mathcal{D}\subset (0,T)\times\Omega$. A classical question in this context is null controllability, that is, whether one can achieve $y(T,\cdot)=0$.

For linear control systems, by the well-known duality argument, null controllability is equivalent to an appropriate observability inequality. Therefore, for the linear GLE, the key step is to establish such an observability inequality. A classical tool for this purpose is the Carleman estimate. Usually, one considers control regions of the cylinder form $\mathcal{D}=(0,T)\times\omega$, where $\omega\subset\Omega$ is a nonempty open subset. This setting enables the construction of suitable weights for Carleman estimates and leads to observability inequalities for operators with a complex principal part; see \cite{F}.

For the cubic GLE, a common approach is the linearization technique. One standard way to proceed is to use Carleman inequalities, establish the observability (null controllability) of the equation linearized around suitable trajectories, and then combine these results with a fixed-point argument to obtain the local null controllability of the nonlinear equation; we refer to \cite{PWZ} for the semilinear heat equation and \cite{RZ} for the semilinear GLE. More recently, after linearization, the inverse mapping theorem was used in \cite{CMM} to establish the local null controllability of the cubic GLE with dynamic boundary conditions. We also mention \cite{DFLZ}, where global Carleman inequalities were established for the Ginzburg--Landau operator with a cubic nonlinearity.

Note that the cylinder control region considered above, $\mathcal{D}=(0,T)\times\omega$, is an open subset of $(0,T)\times\Omega$. It is natural to ask whether this geometric structure can be relaxed to general measurable control sets, which also arise in time-optimal control and bang-bang analysis; see \cite{PW,PWZ,W}. In the case $\mathcal{D}=E\times\omega$, where $E\subset(0,T)$ is measurable, \cite{PWZ} introduced a telescoping series method to establish observability inequalities for the heat equation, and hence proved null controllability. In recent years, significant progress has been made in the study of controllable sets $\mathcal{D}$ for the heat equation, for instance in the case of sets of positive measure \cite{AEWZ} and even sets of measure zero \cite{burq,HW}.

When $\mathcal D$ is merely measurable in the space variable, it is in general difficult to construct weight functions suitable for deriving Carleman inequalities and the corresponding observability. Following \cite{AEWZ,PW,W}, one can instead combine the spectral inequality for the Laplacian with the telescoping series method to establish observability for the linear GLE. We also mention the recent work \cite{FWYZ}, where observability on measurable sets was established under a single real observation. However, in the presence of a cubic nonlinearity, the linearization procedure requires the observability of the linearized equation with a space-time dependent potential term. Unfortunately, observability for control regions that are merely measurable does not seem to be available in general, unless the potential enjoys sufficient regularity; see \cite{EMZ} for the observability of parabolic equations on measurable sets with time-dependent analytic potentials.

The approaches mentioned above for proving null controllability yield open-loop controls and thus provide only existence results. Besides these, constructive methods have also been developed, among which the time-iteration method is a representative one. In \cite{LR}, Lebeau-Robbiano introduced a time-iteration argument to establish small-time null controllability for the heat equation through a balance between the control cost for the low-frequency projections over small times and the dissipative behavior of the high-frequency components. This strategy was subsequently formulated in an abstract semigroup framework in \cite{M}; see also \cite{LL}.

Recently, a new time-iteration method based on quantitative rapid stabilization has been effectively applied to achieve small-time null controllability, as shown in \cite{CN,X1,XXZLQ}. We first recall the notion of rapid stabilization. By applying a feedback control, one aims to ensure that a suitable energy $E(t)$ of the closed-loop system decays exponentially with a prescribed rate $\lambda>0$ and some stabilization cost $C_\lambda>0$:
$$E(t) \leq C_\lambda \mathrm{e}^{-\lambda t}E(0),\quad \forall t\ge 0.$$
When the dependence of $C_\lambda$ on $\lambda$ is made explicit, we call it quantitative rapid stabilization.

Quantitative rapid stabilization has attracted considerable attention in recent years; see, among others, \cite{CN,EMZ,GHMSXZ,NguyenKdV,X2,XXZMD,XXZLQ}. Among the available approaches, the Frequency--Lyapunov method \cite{X2,XXZLQ,XZ} is particularly well suited to parabolic systems because it exploits the natural separation between low- and high-frequency dynamics. The low frequencies are stabilized by a finite-dimensional feedback constructed from a suitable spectral inequality, whereas the high frequencies are controlled by the intrinsic dissipation of the parabolic dynamics. A Lyapunov functional then combines these two mechanisms to yield the decay, together with quantitative bounds on the feedback.

A significant consequence of quantitative rapid stabilization is its connection with null controllability. By applying feedback laws with increasing decay rates on a sequence of shrinking time intervals, one can force the closed-loop state to converge to zero at a finite accumulation time. This strategy was initiated for one-dimensional parabolic equations in \cite{CN} and was subsequently used in \cite{X2} to recover small-time null-controllability costs for the heat equation. In a nonlinear setting, the same principle can be combined with perturbative estimates to obtain local null controllability; see \cite{X1,XXZLQ}. We also refer to \cite{HLP}, where the controllability of a stochastic heat equation with multiplicative noise is achieved.

The standard stabilization-to-controllability argument is based on a time iteration over a sequence of shrinking time intervals. In the present measurable-set setting, such intervals cannot be chosen arbitrarily. Following the time-slicing and density-point constructions used in measurable-set observability \cite{AEWZ}, we can extract a measurable set of times whose corresponding spatial slices have uniformly positive measure, and then choose a density point yielding a sequence of shrinking intervals on which this set has a uniform positive density. The former gives uniform spectral inequalities for the low-frequency modes, while the latter provides the effective stabilization of these modes. The standard quantitative rapid-stabilization iteration can then be performed along these intervals.

\subsection{The main result}

The main result of this paper can be stated as follows.

\begin{theorem}\label{main-th-null-control}
Let $T\in(0,1)$. Let $x_0\in \Omega$ and $r\in(0,1)$ be such that $B_{4r}(x_0)\subset \Omega$. Then, for each measurable set $\mathcal{D} \subset (0,T)\times B_r(x_0)$ with $|\mathcal{D}|>0$, there exist $0<l_1<l<T$, $\delta=\delta(\Omega,Q,\mathcal D,T,r,d,\beta)>0$, and a strongly measurable operator family $K_{\mathcal D}:(0,T)\to\mathcal L(L^2(\Omega))$ such that
\begin{equation}\label{feedback-local-space}
K_{\mathcal D}\in L^\infty_{\rm loc}\bigl([l_1,l);\mathcal L(L^2(\Omega))\bigr), \qquad K_{\mathcal D}(t)=0 \quad\text{for }t\in(0,l_1)\cup[l,T).
\end{equation}
For every $y_0\in H^1_0(\Omega)$ satisfying $\|y_0\|_{H^1_0(\Omega)}\le\delta$, the equation \eqref{eq-1} with the feedback control $u(t)=K_{\mathcal D}(t)y(t)$ has a unique solution $y\in C([0,T];H^1_0(\Omega))$ satisfying
\begin{equation}\label{yequalto0}
y(t,\cdot)=0,\qquad\forall t\in[l,T].
\end{equation}
Moreover, there exist $M=M(\Omega,Q,\mathcal D,T,r,d,\beta)>0$ and $\kappa=\kappa(\Omega,Q,\mathcal D,T,r,d,\beta)>0$ such that
\begin{equation}\label{state-control-decay-main}
\|u(t)\|_{L^2(\Omega)}+\|y(t)\|_{H^1_0(\Omega)}\le M\exp\!\left(-\frac{\kappa}{l-t}\right)\|y_0\|_{H^1_0(\Omega)}, \qquad \text{for a.e. }t\in(l_1,l),
\end{equation}
and consequently
\begin{equation}\label{con-cost}
\|u\|_{L^\infty(0,T;L^2(\Omega))}\le M\|y_0\|_{H^1_0(\Omega)}.
\end{equation}
\end{theorem}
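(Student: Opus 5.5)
The plan is to combine a slicing/density-point construction with the Frequency--Lyapunov method and a time iteration with piecewise feedbacks.

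\textbf{Step 1: slicing and density point.} For $t\in(0,T)$ let $\mathcal D_t=\{x:(t,x)\in\mathcal D\}\subset B_r(x_0)$. By Fubini, $|\mathcal D|=\int_0^T|\mathcal D_t|\,dt>0$. Hence there is $\gamma>0$ such that
$$E=\{t\in(0,T): |\mathcal D_t|\ge\gamma\}$$
has positive measure. We pick a Lebesgue density point $l\in(0,T)$ of $E$. We then choose $l_1<l$ and a geometric sequence $l_1=t_0<t_1<\dots\to l$, with $t_{k+1}-t_k=\rho^k(t_1-t_0)$ for some $\rho\in(0,1)$, such that $|E\cap(t_k,t_{k+1})|\ge c_0(t_{k+1}-t_k)$ for some uniform $c_0>0$. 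On each slice $\mathcal D_t$, $t\in E$, we use the spectral inequality for the Dirichlet Laplacian on measurable sets of positive measure (Apraiz--Escauriaza--Wang--Zhang type). It holds uniformly in $t$ because the constant depends only on $\gamma$, $r$, $\Omega$ and $d$:
$$\|\phi\|_{L^2(\Omega)}\le Ce^{C\sqrt{\Lambda}}\|\phi\|_{L^2(\mathcal D_t)},\qquad \phi\in E_\Lambda:=\mathrm{span}\{e_j:\lambda_j\le\Lambda\}.$$

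\textbf{Step 2: quantitative stabilization of the linear GLE on one interval.} On $[t_k,t_{k+1}]$ we design a feedback $K_k(t)=-\mu\,\mathcal X_E(t)\,\mathcal X_{\mathcal D_t}\Pi_\Lambda$, where $\Pi_\Lambda$ is the projection onto $E_\Lambda$. We take $\Lambda\sim\lambda$ and gain $\mu\sim e^{C\sqrt\lambda}$. We work with a Lyapunov functional $V=\|\Pi_\Lambda y\|^2+\epsilon\|(I-\Pi_\Lambda)y\|_{H^1_0}^2$, or its $L^2$-then-$H^1$ analogue; the term $(1+i\alpha)\Delta$ is dissipative since its real part is $\Delta$. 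Low modes are damped only at times in $E$. The spectral inequality gives
$$\frac{d}{dt}\|\Pi_\Lambda y\|^2\le -\frac{2\mu}{Ce^{2C\sqrt\Lambda}}\mathcal X_E(t)\|\Pi_\Lambda y\|^2+\text{coupling},$$
and the density $c_0$ converts the average damping into the decay $e^{-c\lambda(t_{k+1}-t_k)}$ up to a cost $Ce^{C\sqrt\lambda}$. High modes decay like $e^{-\Lambda t}$. The cross terms come from the non-selfadjointness, since $\mathcal X_{\mathcal D_t}$ does not commute with $\Pi_\Lambda$, and are absorbed because $\|\mathcal X_{\mathcal D_t}\Pi_\Lambda y\|\le\|\Pi_\Lambda y\|$. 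The target is an estimate
$$\|y(t_{k+1})\|_{H^1_0}\le Ce^{C\sqrt{\lambda_k}}e^{-c\lambda_k\tau_k}\|y(t_k)\|_{H^1_0},\qquad \|K_k\|\le Ce^{C\sqrt{\lambda_k}},$$
where $\tau_k=t_{k+1}-t_k$.

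\textbf{Step 3: nonlinear perturbation and iteration.} For $d\le3$ the cubic term is locally Lipschitz in $H^1_0$, with $\||y|^2y\|_{L^2}\le C\|y\|_{H^1_0}^3$. A Duhamel/energy argument on each interval therefore yields the same estimate provided $\|y(t_k)\|_{H^1_0}$ is small relative to $e^{-C\sqrt{\lambda_k}}$. We choose $\lambda_k=\lambda_0\rho^{-2k}$, so that $\lambda_k\tau_k\sim\rho^{-k}$ dominates $\sqrt{\lambda_k}\sim\rho^{-k}$ once $\lambda_0$ is large. Telescoping then gives
$$\|y(t_k)\|\le \exp(-c\rho^{-k})\|y_0\|\sim\exp\bigl(-\kappa/(l-t_k)\bigr)\|y_0\|.$$
Since this decay beats the gains $\|K_k\|\le e^{C\rho^{-k}}$, we obtain \eqref{state-control-decay-main}. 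The smallness propagates by induction from $\|y_0\|\le\delta$, after a preliminary free evolution on $(0,l_1)$, which is controlled by local well-posedness. Defining $y=0$ on $[l,T]$ and $K_{\mathcal D}=0$ there gives a $C([0,T];H^1_0)$ solution, uniqueness follows from the Lipschitz estimates, and \eqref{con-cost} follows.

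\textbf{Main obstacle.} I expect the main obstacle to be Step 2: obtaining decay when the control acts only on the time-varying, intermittent set $E$ with time-dependent slices. Pointwise Lyapunov decay fails off $E$. I would first integrate the differential inequality over the interval and use the density bound $c_0$, letting the high-frequency dissipation, which is strong at $\Lambda\sim\lambda$, control leakage during inactive times.
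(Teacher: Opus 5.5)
Your proposal is correct and takes the same route as the paper: Fubini slicing to get a time set $E$ with uniformly large slices, a density point of $E$ with geometrically shrinking intervals of uniform density, the uniform measurable-set spectral inequality, a Frequency--Lyapunov functional weighting low modes in $L^2$ against high modes in $H^1_0$, absorption of the cubic term under an $e^{-C\sqrt{\lambda}}$-smallness condition, and the iteration $\lambda_m\sim 2^{2m}/(l-l_1)^2$ after a free evolution on $(0,l_1)$. Two small corrections: your $\epsilon$ is the paper's $1/\gamma_\lambda$, and the cross term from $\mathcal X_{\mathcal D_t}P_\lambda$ is absorbed by choosing the weight so that $\mu_\lambda^2=\lambda\gamma_\lambda$ in Young's inequality, not by the contraction bound alone; also, your Step~2 obstacle is milder than you expect, since off $E$ the feedback vanishes and the linear part commutes with $P_\lambda$, so once the cubic term is absorbed, Gronwall on $\frac{d}{dt}\mathcal V\le\bigl(2Q-\frac{\lambda}{2}\mathcal X_E(t)\bigr)\mathcal V$ plus the density bound gives the endpoint decay directly.
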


Several remarks are given in order.

\begin{remark}\label{rek-feedu}
The time-dependent feedback family is locally essentially bounded on the interval $(l_1,l)$. More precisely, the construction gives $I_m=[l_m,l_{m+1})$ with $l_m\uparrow l$ and finite-dimensional stationary operators $\mathcal K_{\lambda_m}$ such that
$$K_{\mathcal D}(t)=\mathcal X_E(t)\mathcal K_{\lambda_m},\qquad\forall t\in I_m.$$
The operator norms increase as $m\to\infty$, but the decay of the state compensates for this growth and yields the control bound \eqref{con-cost}. In fact, there is a constant $A>0$ such that for every $\varepsilon \in(0,l-l_1)$,
\begin{equation}\label{local-feedback-bound}
\|K_{\mathcal D}\|_{L^\infty(l_1,l-\varepsilon;\mathcal L(L^2(\Omega)))}\le A\exp(A/\varepsilon).
\end{equation}
\end{remark}

\begin{remark}\label{cost-form}
Let the control region be the classical cylinder form, i.e., $\mathcal{D}=(0,T)\times\omega$, where $\omega\subset B_r(x_0)$ satisfies $|\omega|>0$. Then the controlled equation \eqref{eq-1} is locally null controllable in $H^1_0(\Omega)$. In particular, there exists $L=L(\Omega,Q,r,d,\beta,\omega)\ge 1$ such that the constants $\delta$ and $M$ depend on $T$ as
\begin{equation}\label{classical-cost}
\delta=e^{-L/T},\qquad M=e^{L/T}.
\end{equation}
Note that the control cost $M=e^{L/T}$ is consistent with the case of parabolic equations.
\end{remark}

\subsection{Strategy of the proof of the main result}\label{str-proof}

\noindent \textit{{Step 1. Partition of the time interval.}}
\vspace{1mm}

For simplicity, we assume that $\mathcal{D}=E\times\omega$, where $E$ and $\omega$ are measurable sets of positive measure in time and space variables, respectively. The following standard density-point lemma is used in the time decomposition; see also \cite{PW,W}.
\begin{lemma}\label{density-point-INTRO}
Let $E\subset(0,T)$ be measurable with $|E|>0$, and let $l\in(0,T)$ be a density point of $E$. Then, for every $z>1$, there exists $l_1\in(0,l)$ such that the sequence
\begin{equation}\label{density-sequence}
l_m=l-\frac{l-l_1}{z^{m-1}},\qquad m\ge1,
\end{equation}
satisfies $l_m\uparrow l$ and
\begin{equation}\label{E-li}
|E \cap (l_{m}, l_{m+1})| \geq \frac{1}{3}(l_{m+1} - l_m),\qquad m\ge1.
\end{equation}
\end{lemma}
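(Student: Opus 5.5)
The plan is to use the Lebesgue density theorem at $l$ from the left and pick $l_1$ so close to $l$ that $E$ fills almost all of every left-neighbourhood $(l-h,l)$ with $0<h\le l-l_1$. The geometric choice \eqref{density-sequence} then makes each interval $(l_m,l_{m+1})$ a fixed fraction of the left-neighbourhood $(l_m,l)$. Near-full density on $(l_m,l)$ therefore forces a definite density on $(l_m,l_{m+1})$.

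\emph{Step 1.} Since $l$ is a density point of $E$,
$$\frac{|E\cap(l-h,l)|}{h}\to 1 \quad\text{as } h\downarrow 0.$$
This holds because the symmetric density ratio tends to $1$, and the complement $(l-h,l)\setminus E$ has measure at most $|(l-h,l+h)\setminus E|=o(h)$. Fix $z>1$ and set $\eta=\frac{2}{3}\cdot\frac{z-1}{z}\in(0,1)$. Choose $h_0\in(0,l)$ such that
$$|(l-h,l)\setminus E|\le \eta h\quad\text{for all } 0<h\le h_0,$$
and put $l_1=l-h_0\in(0,l)$.

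\emph{Step 2.} With $l_m$ as in \eqref{density-sequence}, let $h_m=l-l_m=h_0 z^{-(m-1)}$. Then $h_m\downarrow 0$, so $l_m\uparrow l$ with $l_m\in(0,l)$. The interval length is
$$l_{m+1}-l_m=h_m-h_{m+1}=h_m\frac{z-1}{z}.$$
Since $(l_m,l_{m+1})\subset(l-h_m,l)$ and $h_m\le h_0$, we get
$$|(l_m,l_{m+1})\setminus E|\le \eta h_m=\frac{2}{3}(l_{m+1}-l_m).$$
Hence $|E\cap(l_m,l_{m+1})|\ge\frac13(l_{m+1}-l_m)$ for every $m\ge1$, which is \eqref{E-li}.

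The only delicate point is the one-sided density in Step 1, together with choosing the density threshold $\eta$ to depend on $z$. The factor $\frac{z-1}{z}$ shrinks as $z\downarrow1$, so the required closeness of $l_1$ to $l$ depends on $z$. This dependence is allowed, since $l_1$ is chosen after $z$.
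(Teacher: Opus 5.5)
Your proposal is correct and follows essentially the same argument as the paper. The paper also takes the threshold $\varepsilon=2(z-1)/(3z)$, sets $l_1=l-a$, and bounds $|(l_m,l_{m+1})\setminus E|\le|(l_m,l)\setminus E|\le\varepsilon(l-l_m)=\frac23(l_{m+1}-l_m)$. Your Step 1 deriving the one-sided density from the symmetric one is a detail the paper leaves implicit.
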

\begin{proof}
Since $l$ is a density point of $E$, for $\varepsilon=2(z-1)/(3z)$ there exists $a\in(0,l)$ such that
$$|(l-r,l)\setminus E|\le \varepsilon r,\qquad 0<r\le a.$$
Set $l_1=l-a$ and define $l_m$ by \eqref{density-sequence}. Then
$$l_{m+1}-l_m=\frac{z-1}{z}(l-l_m).$$
Consequently,
$$|(l_m,l_{m+1})\setminus E|\le |(l_m,l)\setminus E|\le \varepsilon(l-l_m)=\frac23(l_{m+1}-l_m),$$
which proves \eqref{E-li}.
\end{proof}

\noindent \textit{{Step 2. Quantitative small-time stabilization on each time interval.}}
\begin{lemma}\label{ra-sta-new-INTRO}
There exists a constant $\tilde{C}>0$ such that for any $\lambda>1$, there is a stationary feedback law $\tilde{K}_{\lambda}$ so that for each $m\in\mathbb N^+$, if $y(l_m)\in H^1_0(\Omega)$ satisfies
$$\|y(l_m)\|_{H^1_0(\Omega)}\le \tilde\rho_\lambda:=\tilde C^{-1}e^{-\tilde C\sqrt\lambda},$$
then equation \eqref{eq-1} with $u(t)=\tilde K_\lambda y(t)$ admits a unique solution $y\in C([l_m,l_{m+1}];H^1_0(\Omega))$ satisfying
\begin{equation}\label{y-decay-meas-1-intro}
\|u(t)\|_{L^2(\Omega)}+\|y(t)\|_{H^1_0(\Omega)}\le 2\tilde C e^{\tilde C\sqrt\lambda}\|y(l_m)\|_{H^1_0(\Omega)}, \qquad t\in[l_m,l_{m+1}),
\end{equation}
and
\begin{equation}\label{y-decay-meas-2-intro}
\|y(l_{m+1})\|_{H^1_0(\Omega)}\le 2\tilde C e^{\tilde C\sqrt\lambda}e^{-\lambda(l_{m+1}-l_m)} \|y(l_m)\|_{H^1_0(\Omega)}.
\end{equation}
\end{lemma}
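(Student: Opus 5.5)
We prove Lemma \ref{ra-sta-new-INTRO} by the Frequency--Lyapunov method. First we build a stationary feedback for the linear part; then we treat the cubic term perturbatively using the smallness $\|y(l_m)\|_{H^1_0}\le\tilde\rho_\lambda$. Let $A=(1+i\alpha)\Delta$ with Dirichlet conditions. Its eigenfunctions are the real Dirichlet eigenfunctions $e_k$ of $-\Delta$, with eigenvalues $-(1+i\alpha)\mu_k$. Let $\Pi_\mu$ be the projection onto $\mathrm{span}\{e_k:\mu_k\le\mu\}$ and choose $\mu\simeq\lambda$.

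The spectral inequality for measurable sets (as in \cite{AEWZ}) gives $\|\Pi_\mu f\|_{L^2(\Omega)}\le Ce^{C\sqrt\mu}\|\Pi_\mu f\|_{L^2(\omega)}$ whenever $|\omega|>0$, $\omega\subset B_r(x_0)$. The constant depends on $\omega$ only through $|\omega|$ and $r$; this uniformity is what later allows varying time slices.

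\textbf{Low frequencies.} Consider the low-frequency Gramian $G=\Pi_\mu\mathcal X_\omega\Pi_\mu$. By the spectral inequality, $G\ge C^{-1}e^{-2C\sqrt\mu}$ on $\Pi_\mu L^2$. We take the feedback $\tilde K_\lambda y=-\gamma\,\mathcal X_\omega\Pi_\mu y$, with $\gamma\simeq \lambda Ce^{2C\sqrt\mu}$ chosen large enough that the low modes decay at rate $2\lambda$ on times in $E$, despite the growth from $Q$. In the case $\mathcal D=E\times\omega$ the control only acts for $t\in E$. On $E^c$ the low-frequency energy $\|\Pi_\mu y\|^2$ grows at most like $e^{2|Q|t}$. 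The density estimate \eqref{E-li} then guarantees a net decay over $[l_m,l_{m+1}]$: we gain at least $e^{-2\lambda\cdot\frac13(l_{m+1}-l_m)}$ against a loss of $e^{2|Q|(l_{m+1}-l_m)}$. Rescaling $\lambda$ by a constant factor gives \eqref{y-decay-meas-2-intro}.

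\textbf{High frequencies.} High modes satisfy $\frac{d}{dt}\|(1-\Pi_\mu)y\|^2\le-2(\mu-|Q|)\|(1-\Pi_\mu)y\|^2+$ coupling terms. The coupling comes from $(1-\Pi_\mu)\mathcal X_\omega\Pi_\mu y$ and has size $\gamma\|\Pi_\mu y\|$. We define the Lyapunov functional $V=\|\Pi_\mu y\|^2+\eta\|(1-\Pi_\mu)y\|^2$, with $\eta\simeq e^{-C\sqrt\mu}$ small, so that the coupling is absorbed. This yields $V(t)\le e^{-\lambda (t-l_m)}V(l_m)$ up to constants $e^{C\sqrt\lambda}$. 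We run the same computation at the $H^1_0$ level, using the energy $\|\nabla y\|^2$ and the fact that $\Pi_\mu$ commutes with $\Delta$. This gives the linear versions of \eqref{y-decay-meas-1-intro} and \eqref{y-decay-meas-2-intro}, with constant $\tilde Ce^{\tilde C\sqrt\lambda}$. The bound on $\|u\|$ follows from $\|\tilde K_\lambda\|\le\gamma\le Ce^{C\sqrt\lambda}$, after enlarging $\tilde C$.

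\textbf{Nonlinearity (main obstacle).} This step requires controlling $(1+i\beta)|y|^2y$ in $H^1_0$ for $d\le3$. We will test the equation against $-\Delta y$ and use $H^1\hookrightarrow L^6$, which gives $\|\,|y|^2y\|_{L^2}\le C\|y\|_{H^1}^3$, and we treat the cubic term as a source via Duhamel in the closed-loop semigroup. A bootstrap argument on $[l_m,l_{m+1}]$, with the a priori assumption $\|y(t)\|_{H^1_0}\le 2\tilde Ce^{\tilde C\sqrt\lambda}\|y(l_m)\|$, works provided $\tilde Ce^{\tilde C\sqrt\lambda}\|y(l_m)\|^2$ is small. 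This is exactly ensured by $\|y(l_m)\|\le \tilde\rho_\lambda$. We also use $l_{m+1}-l_m\le T<1$, so the nonlinear contribution is at most half the linear bound, which accounts for the factor $2$. Existence and uniqueness come from a local well-posedness argument extended by this bound. The delicate point is doing the Lyapunov estimate at the $H^1_0$ level with the non-self-adjoint feedback $\mathcal X_\omega\Pi_\mu$: this requires $\|\nabla(\mathcal X_\omega\Pi_\mu y)\|$, which is not available. We will handle it by estimating $\langle \mathcal X_\omega\Pi_\mu y,-\Delta y\rangle\le\|\Pi_\mu y\|\,\|\Delta y\|$ and absorbing the result into the dissipation $\|\Delta y\|^2$.
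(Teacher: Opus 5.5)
Your linear construction is essentially the paper's:
\begin{itemize}
\item a feedback $-\gamma\,\mathcal X_\omega\Pi_\mu y$ whose gain comes from the uniform measurable-set spectral inequality;
\item a Lyapunov functional weighting $\|\Pi_\mu y\|^2$ far more heavily than $\|\nabla(1-\Pi_\mu)y\|^2$ (the paper's $V=\gamma_\lambda\|\Pi_\mu y\|^2+\|\nabla(1-\Pi_\mu)y\|^2$ is your $\eta=\gamma_\lambda^{-1}$, with $\gamma_\lambda\sim\lambda e^{2C\sqrt\lambda}$);
\item the control coupling bounded by $\|\Pi_\mu y\|\,\|\Delta(1-\Pi_\mu)y\|$ and absorbed into $\|\Delta(1-\Pi_\mu)y\|^2$;
\item then \eqref{E-li} and a rescaling of $\lambda$.
\end{itemize}

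\textbf{The gap is in the nonlinear step.} The closed loop does not give $V(t)\lesssim e^{C\sqrt\lambda}e^{-\lambda(t-l_m)}V(l_m)$. It only gives $V(t)\le e^{2|Q|(t-l_m)}e^{-c\lambda|E\cap(l_m,t)|}V(l_m)$. Moreover, \eqref{E-li} controls $E$ only on the whole interval $(l_m,l_{m+1})$, so a subinterval $(s,l_{m+1})$ may miss $E$ entirely.

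Now suppose you treat $(1+i\beta)|y|^2y$ as a source under the \emph{uniform} hypothesis $\|y(s)\|_{H^1_0}\le 2K\|y(l_m)\|_{H^1_0}$. The part of the Duhamel integral with $s$ near $l_{m+1}$ then receives no decay: none from the evolution over $(s,l_{m+1})$, and none from your hypothesis over $(l_m,s)$. So the nonlinear contribution to $y(l_{m+1})$ is only bounded by something like $K^4\|y(l_m)\|^3_{H^1_0}$. For this to be at most the linear term $Ke^{-\lambda(l_{m+1}-l_m)}\|y(l_m)\|_{H^1_0}$, you need $K^3\|y(l_m)\|^2\lesssim e^{-\lambda(l_{m+1}-l_m)}$. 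That condition depends on $m$, and for $m=1$ and $\lambda\to\infty$ it lies far below $\tilde C^{-1}e^{-\tilde C\sqrt\lambda}$. Your ``at most half the linear bound'' argument therefore gives \eqref{y-decay-meas-1-intro} but not \eqref{y-decay-meas-2-intro}.

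There is also a bookkeeping problem. With the same $\tilde C$ in your hypothesis and in $\tilde\rho_\lambda$, the hypothesis only says $\|y(t)\|_{H^1_0}\le 2$, which gives no smallness at all. The closing condition involves a higher power of the bootstrap constant. You must bootstrap with the linear constant and enlarge $\tilde C$ in the threshold only afterwards.

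\textbf{The fix, which is what the paper does, is to absorb the cubic term rather than propagate it.}
\begin{itemize}
\item Using $\|\,|y|^2y\|_{L^2}\le c_0\|y\|^3_{H^1_0}$ and Young, bound the cubic contributions to $\frac{d}{dt}V$ by $\gamma_\lambda\|\nabla\Pi_\mu y\|^2+\frac14\|\Delta(1-\Pi_\mu)y\|^2+C(\gamma_\lambda+1)\|y\|^6_{H^1_0}$.
\item Keep in reserve the intrinsic dissipation $\gamma_\lambda\|\nabla\Pi_\mu y\|^2+\frac14\|\Delta(1-\Pi_\mu)y\|^2\ge c_*\|y\|^2_{H^1_0}$. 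It is available at every time, including $t\notin E$.
\item Under the a priori smallness $\|y(t)\|^4_{H^1_0}\le c_*/(2C(\gamma_\lambda+1))$, the nonlinearity drops out and $\frac{d}{dt}V\le(2Q-\frac\lambda2\mathcal X_E(t))V$ holds pointwise in time.
\item Integrating over $(l_m,l_{m+1})$ gives both estimates at once.
\item A first-exit-time argument closes the a priori bound as soon as $\|y(l_m)\|_{H^1_0}\lesssim\gamma_\lambda^{-3/4}e^{-|Q|T}$, which is of the form $\tilde C^{-1}e^{-\tilde C\sqrt\lambda}$.
\end{itemize}

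Alternatively, a source treatment does work if the bootstrap hypothesis itself carries the decay, $\|y(t)\|_{H^1_0}\le 2Ke^{|Q|(t-l_m)}e^{-c\lambda|E\cap(l_m,t)|}\|y(l_m)\|_{H^1_0}$. Then the cube supplies the decay over $(l_m,s)$ and the closed-loop evolution supplies the decay over $(s,t)$. This is most easily carried out at the level of the energy inequality, where $\|\,|y|^2y\|_{L^2}\le c_0\|y\|^3_{H^1_0}$ suffices.
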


The proof is based on the Frequency--Lyapunov method. For each $\lambda$, we construct a finite-dimensional bounded feedback operator and a Lyapunov functional that controls the $H^1_0$-norm of the solution. The resulting differential inequality contains the dissipative term $-\lambda\mathcal X_E(t)$; after integration over $(l_m,l_{m+1})$, the density estimate \eqref{E-li} yields the endpoint decay \eqref{y-decay-meas-2-intro}, while the same Lyapunov estimate gives \eqref{y-decay-meas-1-intro}. The precise statement and proof are given in Theorem~\ref{ra-sta-new}.
\vspace{1mm}

\noindent \textit{{Step 3. Piecewise feedback law for null controllability.}}\vspace{1mm}

We construct a sequence $\{\lambda_m\}_{m\in\mathbb{N}^*}$ satisfying
$$1<\lambda_1<\lambda_2<\cdots<\lambda_m\to \infty,$$
such that on each time interval $[l_{m},l_{m+1})$, we apply the stationary feedback law $\tilde{K}_{\lambda_m}$ associated with the decay rate $\lambda_m$. Therefore, we consider the following closed-loop system:
$$\begin{cases}
\partial_t y=(1+i\alpha)\Delta y+Qy-(1+i\beta)|y|^2y+\mathcal{X}_{E}\mathcal{X}_{\omega}\tilde{K}_{\lambda_m}y & \text{for each} \;\;\; (l_m,l_{m+1})\times \Omega, \\
y|_{\partial\Omega}=0.
\end{cases}$$
By carefully choosing $z>1$ and the sequence $\{\lambda_m\}_{m\in\mathbb{N}^*}$, we can prove that, for any sufficiently small initial state $y(l_1) \in H^1_0(\Omega)$, the unique solution of the system on $[l_1,l)$ satisfies
\begin{equation}\label{y-to0}
\lim_{t\to l^-} y(t)=0 \quad \text{in } H^1_0(\Omega).
\end{equation}
In fact, when $\|y(l_1)\|_{H^1_0(\Omega)}$ is sufficiently small, the estimates \eqref{y-decay-meas-1-intro}--\eqref{y-decay-meas-2-intro} imply $\|y(l_m)\|_{H_0^1(\Omega)} \leq \tilde{\rho}_{\lambda_m}$, so that the feedback can be restarted on every interval. The choice of $\lambda_m$ made in Section~\ref{sec-qrs} also makes the product of the endpoint contraction factors tend to zero and proves \eqref{y-to0}. On $(0,l_1)$ and $[l,T)$ we set the feedback equal to zero. The control estimate follows by summing the quantitative bounds \eqref{y-decay-meas-1-intro}--\eqref{y-decay-meas-2-intro} along the iteration.

\subsection{Conclusion}
The main contributions of this paper are twofold. First, we establish the local null controllability of the cubic GLE with controls supported on an arbitrary space--time measurable set of positive measure. To the best of our knowledge, this appears to be the first such result for a nonlinear parabolic equation under such a general control geometry. Second, we extend the quantitative rapid stabilization approach to null controllability from the classical time-iteration setting to general measurable control sets by combining it with a time-slicing and density-point construction.\\[-2mm]

\noindent \textbf{Acknowledgments.} This work was supported by the National Natural Science Foundation of China under the grant 12422118.

\section{Feedback law}\label{con-wepos}

Let $\{(\Lambda_k,\varphi_k)\}_{k\ge1}$ be the Dirichlet eigensystem of $-\Delta$, with $0<\Lambda_1\le\Lambda_2\le\cdots$ and $\{\varphi_k\}$ orthonormal in $L^2(\Omega)$. For $\lambda>0$, set
$$N(\lambda)=\max\{k:\Lambda_k\le\lambda\},$$
with $N(\lambda)=0$ if $\lambda<\Lambda_1$, and define
\begin{equation}\label{low-fre-pro}
P_\lambda f=\sum_{k=1}^{N(\lambda)}\langle f,\varphi_k\rangle\varphi_k, \qquad P_\lambda^\perp=I-P_\lambda.
\end{equation}

Let $\mu_{\lambda}\ge 1$ be a constant depending on $\lambda$, which will be fixed later. We take the following feedback control
\begin{equation}\label{U-feedback-law}
u= -\mu_{\lambda} P_{\lambda} y.
\end{equation}
We have the following well-posedness result of the closed-loop system.

\begin{theorem}[Local well-posedness]\label{we-pos-th}
Let $\mathcal{D}\subset\mathbb{R}^+\times\Omega$ be Lebesgue measurable, let $R>0$ and $\lambda>0$. For every $y_0\in H^1_0(\Omega)$ with $\|y_0\|_{H^1_0(\Omega)}\le R$, there exists $\widetilde T>0$ depending on $R$ and $\lambda$, such that
\begin{equation}\label{we-eq}
\begin{cases}
\partial_t y=(1+i\alpha)\Delta y+Qy-(1+i\beta)|y|^2y
-\mu_\lambda\mathcal X_{\mathcal D}P_\lambda y
&\text{in }(t_1,t_2)\times\Omega,\\
y=0&\text{on }(t_1,t_2)\times\partial\Omega,\\
y(t_1,\cdot)=y_0&\text{in }\Omega
\end{cases}
\end{equation}
admits a unique strong solution $y\in C([t_1,t_2];H^1_0(\Omega)) \cap L^2(t_1,t_2;H^2(\Omega)\cap H^1_0(\Omega))$ whenever $0<t_2-t_1<\widetilde T$. If $[t_1,t_1+T_{\max})$ is the maximal existence interval and $T_{\max}<\infty$, then
$$\limsup_{t\to (t_1+T_{\max})^-} \|y(t)\|_{H^1_0(\Omega)}=+\infty.$$
\end{theorem}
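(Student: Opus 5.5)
We prove the statement by a Banach fixed-point argument in the space
$$X_\tau=C([t_1,t_1+\tau];H^1_0(\Omega))\cap L^2(t_1,t_1+\tau;H^2(\Omega)\cap H^1_0(\Omega)),$$
equipped with the norm $\|y\|_{X_\tau}=\sup_t\|y(t)\|_{H^1_0}+\|y\|_{L^2(H^2)}$. Let $A=(1+i\alpha)\Delta$ with Dirichlet conditions. Since $\mathrm{Re}(1+i\alpha)=1>0$, $A$ generates an analytic contraction semigroup on $L^2(\Omega)$, and it commutes with the spectral projections. The linear problem $\partial_t w=Aw+f$, $w(t_1)=y_0$, with $y_0\in H^1_0$ and $f\in L^2(t_1,t_1+\tau;L^2)$, has maximal regularity:
$$\|w\|_{X_\tau}\le C_0\bigl(\|y_0\|_{H^1_0}+\|f\|_{L^2(L^2)}\bigr),$$
with $C_0$ independent of $\tau\le1$. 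This follows from energy estimates obtained by testing with $-\Delta \bar w$ and taking real parts, which works because the real part of the diffusion coefficient is $1$.

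Define the map $\Phi(y)=w$ with source term
$$f=Qy-(1+i\beta)|y|^2y-\mu_\lambda\mathcal X_{\mathcal D}P_\lambda y.$$
The feedback term is harmless: $\mathcal X_{\mathcal D}(t,x)$ is merely measurable, but multiplication by it is a contraction on $L^2$ at a.e.\ fixed time. Moreover $P_\lambda$ is bounded with norm $1$ on $L^2$. Hence this term is a bounded linear perturbation of size $\mu_\lambda$, and its contribution to $\|f\|_{L^2(L^2)}$ is at most $\mu_\lambda\tau^{1/2}\sup_t\|y\|_{L^2}$. This is why $\widetilde T$ depends on $\lambda$ through $\mu_\lambda$. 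The main obstacle is the cubic term. For $d\le3$ the Sobolev embedding $H^1_0\hookrightarrow L^6$ gives
$$\||y|^2y\|_{L^2}\le C\|y\|_{H^1_0}^3,\qquad \||y|^2y-|z|^2z\|_{L^2}\le C\bigl(\|y\|_{H^1_0}^2+\|z\|_{H^1_0}^2\bigr)\|y-z\|_{H^1_0}.$$
Consequently $\|f\|_{L^2(L^2)}\le \tau^{1/2}C(1+\mu_\lambda+\|y\|_{X_\tau}^2)\|y\|_{X_\tau}$, and the difference estimate has the same structure.

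Choose the ball of radius $2C_0R$ in $X_\tau$ and take $\tau$ small enough that
$$\tau^{1/2}C\bigl(1+\mu_\lambda+4C_0^2R^2\bigr)\le \tfrac{1}{2C_0}\cdot\tfrac12.$$
Then $\Phi$ maps the ball into itself and is a contraction there. This yields existence of a solution for $t_2-t_1<\widetilde T(R,\lambda)$ and uniqueness within the ball. Uniqueness in all of $X_\tau$ follows by a Gronwall argument: estimate the difference of two solutions in $L^2$, or in $H^1_0$ using the same Lipschitz bound, on a short interval where both stay bounded, and then propagate.

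For the blow-up alternative, we argue in the standard way. Because $\widetilde T$ depends only on the bound $R$ of the initial datum, and the equation is nonautonomous only through $\mathcal X_{\mathcal D}$, the estimates above are uniform in the starting time $t_1$. If $\limsup_{t\to (t_1+T_{\max})^-}\|y(t)\|_{H^1_0}<\infty$, say bounded by $R'$, then restarting at a time $s$ with $t_1+T_{\max}-s<\widetilde T(R',\lambda)/2$ extends the solution beyond $t_1+T_{\max}$. By uniqueness this extension glues with the original solution, contradicting maximality.
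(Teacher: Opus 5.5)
Your proposal is correct and follows the paper's proof essentially step by step. Both use a contraction on a ball of radius proportional to $R$ in $C([t_1,t_1+\tau];H^1_0(\Omega))\cap L^2(t_1,t_1+\tau;H^2(\Omega)\cap H^1_0(\Omega))$, with four common ingredients:
\begin{itemize}
\item a $\tau$-uniform linear estimate for $\partial_t-(1+i\alpha)\Delta$;
\item cubic bounds from $H^1_0\hookrightarrow L^6$, each carrying a factor $\tau^{1/2}$;
\item the feedback term $\mu_\lambda\mathcal X_{\mathcal D}P_\lambda y$ treated as a bounded perturbation, which is where the dependence on $\lambda$ enters;
\item uniqueness in the full class by repeating the estimate on short subintervals, and the blow-up alternative by the standard restart argument.
\end{itemize}
The only differences are minor. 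You justify the linear estimate by testing with $-\Delta\bar w$, whereas the paper cites Rosier--Zhang. You also write out the continuation argument, which the paper merely calls standard.
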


\begin{proof}
Let $0<\tau\le1$ and set
$$I_\tau=(t_1,t_1+\tau), \qquad Y_\tau = C([t_1,t_1+\tau];H_0^1(\Omega)) \cap L^2(I_\tau;H^2(\Omega)\cap H_0^1(\Omega)).$$
We use the standard linear estimate for the complex heat operator; see, for instance, \cite[Lemmas 6.1--6.2]{RZ}. More precisely, there exists $D_1\ge1$, independent of $t_1$ and $0<\tau\le1$, such that the solution of
\begin{equation}\label{linear-eq1}
\begin{cases}
z_t-(1+i\alpha)\Delta z=f
&\text{in } I_\tau\times\Omega,\\
z=0
&\text{on } I_\tau\times\partial\Omega,\\
z(t_1,\cdot)=z_0 &\text{in } \Omega,
\end{cases}
\end{equation}
with $z_0\in H_0^1(\Omega)$ and $f\in L^2(I_\tau;L^2(\Omega))$ satisfies
\begin{equation}\label{linear-RZ}
\|z\|_{Y_\tau} \le D_1\left( \|z_0\|_{H_0^1(\Omega)} + \|f\|_{L^2(I_\tau;L^2(\Omega))} \right).
\end{equation}

Since $d\le3$, the Sobolev inequality gives (see \cite{Brezis}), for some $C_S=C_S(\Omega,d)>0$,
\begin{equation}\label{sobolev-H1-L6}
\|v\|_{L^6(\Omega)} \le C_S\|v\|_{H^1_0(\Omega)}, \qquad \forall v\in H^1_0(\Omega).
\end{equation}
Hence, for every $v\in Y_\tau$,
$$\||v|^2v\|_{L^2(I_\tau;L^2(\Omega))} \le C\tau^{1/2}\|v\|_{Y_\tau}^3.$$
Moreover, the pointwise inequality
$$\bigl||z_1|^2z_1-|z_2|^2z_2\bigr| \le C\bigl(|z_1|^2+|z_2|^2\bigr)|z_1-z_2|, \qquad \forall z_1,z_2\in\mathbb C,$$
together with \eqref{sobolev-H1-L6}, yields
\begin{equation}\label{wp-cubic-diff}
\begin{split}
&\||v|^2v-|w|^2w\|_{L^2(I_\tau;L^2(\Omega))}
\le
C\tau^{1/2}
\bigl(\|v\|_{Y_\tau}^2+\|w\|_{Y_\tau}^2\bigr)
\|v-w\|_{Y_\tau}.
\end{split}
\end{equation}

For $v\in Y_\tau$, let $\Gamma(v)$ denote the solution of the linear problem with right-hand side
$$Qv-(1+i\beta)|v|^2v-\mu_\lambda\mathcal X_{\mathcal D}P_\lambda v.$$
Since both multiplication by $\mathcal X_{\mathcal D_t}$ and the projection $P_\lambda$ are contractions on $L^2(\Omega)$ for a.e. $t$, it follows that
\begin{equation}\label{wp-map-short}
\|\Gamma(v)\|_{Y_\tau} \le D_1\|y_0\|_{H_0^1} + C\tau^{1/2} \left[ (|Q|+\mu_\lambda)\|v\|_{Y_\tau} + (1+|\beta|)\|v\|_{Y_\tau}^3 \right].
\end{equation}
Set
$$R_*=2D_1R, \qquad B_{R_*} = \{v\in Y_\tau:\|v\|_{Y_\tau}\le R_*\}.$$
Since $\|y_0\|_{H_0^1}\le R$, for $v\in B_{R_*}$ we have
$$\|\Gamma(v)\|_{Y_\tau} \le \frac{R_*}{2} + C\tau^{1/2} \bigl( |Q|+\mu_\lambda+(1+|\beta|)R_*^2 \bigr)R_*.$$
Thus, if $\tau>0$ is chosen sufficiently small so that $C\tau^{1/2} \bigl( |Q|+\mu_\lambda +2(1+|\beta|)R_*^2 \bigr) \le\frac12$, then $\Gamma(B_{R_*})\subset B_{R_*}.$

We next verify the contraction property. For $v,w\in B_{R_*}$, \eqref{linear-RZ} and \eqref{wp-cubic-diff} give
$$\begin{aligned}
\|\Gamma(v)-\Gamma(w)\|_{Y_\tau}
&\le
C\tau^{1/2}
\Bigl[
|Q|+\mu_\lambda
+(1+|\beta|)
\bigl(\|v\|_{Y_\tau}^2+\|w\|_{Y_\tau}^2\bigr)
\Bigr]
\|v-w\|_{Y_\tau}
\\
&\le
C\tau^{1/2}
\bigl[
|Q|+\mu_\lambda
+2(1+|\beta|)R_*^2
\bigr]
\|v-w\|_{Y_\tau}.
\end{aligned}$$
After decreasing $\tau$ if necessary, the coefficient on the right-hand side is strictly smaller than one. Thus $\Gamma$ is a strict contraction on $B_{R_*}$. Banach's fixed-point theorem yields a solution $y\in Y_\tau$.

We finally prove uniqueness in the whole class $Y_\tau$. Let $y_1,y_2\in Y_\tau$ be two solutions with the same initial datum. Since both solutions belong to $C([t_1,t_1+\tau];H_0^1(\Omega))$, their $H_0^1$-norms are bounded. Applying the preceding difference estimate on a sufficiently short initial subinterval gives $y_1=y_2$ there. Repeating the argument on finitely many consecutive subintervals yields
$$y_1=y_2 \qquad\text{on }[t_1,t_1+\tau].$$
The stated continuation criterion follows by the standard continuation argument.
\end{proof}

\section{Proof of Theorem \ref{main-th-null-control}}\label{sec-qrs}
Since the control region $\mathcal{D}$ is measurable in both time and space, it is natural, in view of the strategy of the proof in Section~\ref{str-proof}, to seek a decomposition of $\mathcal{D}$ as the product of two measurable sets. Unfortunately, such a decomposition is not available in general. Nevertheless, Fubini's theorem guarantees the existence of a measurable set of times of positive measure such that the corresponding spatial sections of $\mathcal{D}$ have uniformly positive measure; see~\cite{AEWZ}.
\begin{lemma}\label{fubini}
Let $B_r(x_0)\subset\Omega$ and let $\mathcal{D}\subset(0,T)\times B_r(x_0)$ be measurable with $|\mathcal{D}|>0$. Set
$$\mathcal{D}_t = \{ x \in \Omega : (t,x) \in \mathcal{D} \},\;\; t \in (0, T), \quad \;\;E = \{ t \in (0, T) : |\mathcal{D}_t| \ge |\mathcal{D}|/(2T) \}.$$
Then $\mathcal{D}_t$ is measurable for a.e. $t \in (0, T)$, $E$ is measurable, and $|E| \ge |\mathcal{D}|/(2|B_r|)$.
\end{lemma}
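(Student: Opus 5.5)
The plan is a Fubini–Tonelli argument followed by a Chebyshev-type splitting of the time integral. First, since $\mathcal{D}$ is a Lebesgue measurable subset of $(0,T)\times\Omega$, its characteristic function $\mathcal X_{\mathcal D}$ is measurable and nonnegative. Tonelli's theorem for the completed product measure then gives three facts. For a.e.\ $t\in(0,T)$ the section $\mathcal{D}_t$ is Lebesgue measurable. The function $t\mapsto|\mathcal{D}_t|$, defined a.e., is measurable. And
$$|\mathcal{D}|=\int_0^T|\mathcal{D}_t|\,dt.$$
Consequently $E$, as a superlevel set of a measurable function, is measurable up to a null set. We may redefine $|\mathcal D_t|:=0$ on the exceptional null set, which does not affect the measure of $E$ or any estimate below.

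Next, since $\mathcal{D}\subset(0,T)\times B_r(x_0)$, every section satisfies $\mathcal{D}_t\subset B_r(x_0)$, hence $|\mathcal{D}_t|\le|B_r|$. Split the integral over $E$ and its complement:
$$|\mathcal{D}|=\int_E|\mathcal{D}_t|\,dt+\int_{(0,T)\setminus E}|\mathcal{D}_t|\,dt\le |B_r|\,|E|+\frac{|\mathcal{D}|}{2T}\,T=|B_r|\,|E|+\frac{|\mathcal D|}{2}.$$
The second term uses the fact that $|\mathcal{D}_t|<|\mathcal{D}|/(2T)$ off $E$. Rearranging gives $|E|\ge|\mathcal{D}|/(2|B_r|)$. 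As a byproduct, $|E|>0$ because $|\mathcal D|>0$.

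No step is a genuine obstacle. The only point needing care is the measurability of the sections and of $t\mapsto|\mathcal D_t|$ for a merely Lebesgue (rather than Borel) measurable $\mathcal D$. For this I would invoke the Fubini–Tonelli theorem for complete product measures, which yields exactly the a.e.\ statements above.
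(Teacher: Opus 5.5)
Your proposal is correct and is essentially the paper's approach: the paper gives no written proof and simply invokes Fubini's theorem (citing \cite{AEWZ}), and your Tonelli identity $|\mathcal D|=\int_0^T|\mathcal D_t|\,dt$ split over $E$ and its complement is exactly that argument. One small tightening: since Lebesgue measure is complete, ``measurable up to a null set'' already means $E$ is Lebesgue measurable, which is what the statement asserts.
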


Let $E$ and $\mathcal{D}_t$ be as in Lemma \ref{fubini}. We use the measurable subset $\{(t,x):t\in E,\ x\in\mathcal D_t\}$, which is contained in $\mathcal D$, as the effective control region. We first recall the definition of the set of density points of $E$. Define
$$\mathcal{E}:=\left\{t\in E:\lim_{\rho\to 0^+}\frac{|E\cap (t-\rho,t+\rho)|}{2\rho}=1\right\}.$$
By the Lebesgue density theorem, $|E\setminus\mathcal E|=0$. Since $|E|>0$, we may therefore choose a density point $l\in\mathcal E$. Furthermore, taking $z=2$ in Lemma \ref{density-point-INTRO}, we obtain the sequence $\{l_m\}_{m\ge 1}$ associated with $l$. We shall need the following quantitative rapid stabilization result on each time interval, which is the precise version of Lemma \ref{ra-sta-new-INTRO}.
\begin{theorem}\label{ra-sta-new}
There exists a constant ${C}=C\left(\Omega,\mathcal{D},T,r,d,\beta\right)\ge 1$ such that for any $\lambda>1$, there is a linear finite-dimensional feedback law $\mathcal{K}_{\lambda}\in\mathcal{L}(L^2(\Omega))$ satisfying
\begin{equation}\label{feedback-lambda-bound}
\|\mathcal K_\lambda\|_{\mathcal L(L^2(\Omega))} \le C e^{C\sqrt\lambda},
\end{equation}
so that for each $m\in \mathbb{N}^+$, if the initial state $y(l_m)\in H^1_0(\Omega)$ satisfies
$$\|y(l_m)\|_{H^1_0(\Omega)}\le {\rho}_{\lambda,T}:= e^{-|Q|T}e^{-{C}\sqrt{\lambda}},$$
then the Cauchy problem
\begin{equation}\label{eq-new-meas}
\begin{cases}
\partial_t y=(1+i\alpha)\Delta y+Qy-(1+i\beta)|y|^2y+\mathcal{X}_{E}\mathcal{X}_{\mathcal{D}_t}\mathcal{K}_{\lambda}y & \text{in} \;\;\; (l_m,l_{m+1})\times \Omega, \\
y=0&\text{on}\;\;  (l_m,l_{m+1}) \times \partial\Omega,
\end{cases}
\end{equation}
admits a unique solution $y\in C([l_m,l_{m+1}]; H^1_0(\Omega))$. With $u(t)=\mathcal X_E(t)\mathcal K_\lambda y(t)$, the solution satisfies
\begin{equation}\label{y-decay-meas-1}
\|u(t)\|_{L^2(\Omega)}+ \|y(t)\|_{ H^1_0(\Omega)}\le e^{|Q|T}e^{{C}\sqrt{\lambda}}\|y(l_m)\|_{ H^1_0(\Omega)},\;\;\forall t\in[l_m,l_{m+1}),
\end{equation}
\begin{equation}\label{y-decay-meas-2}
\|y(l_{m+1})\|_{ H^1_0(\Omega)} \le e^{|Q|T}e^{{C}\sqrt{\lambda}}e^{-{\lambda}(l_{m+1}-l_m)} \|y(l_m)\|_{H^1_0(\Omega)}.
\end{equation}
\end{theorem}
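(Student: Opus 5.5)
We take the feedback $\mathcal K_\lambda=-\mu_\lambda P_\lambda$ from \eqref{U-feedback-law}, with $\mu_\lambda=Ce^{C\sqrt\lambda}$ to be fixed; then \eqref{feedback-lambda-bound} is immediate because $\|P_\lambda\|\le1$. Existence and uniqueness of a local solution of \eqref{eq-new-meas} comes from Theorem~\ref{we-pos-th}, applied with $\mathcal D$ replaced by $\{(t,x):t\in E,x\in\mathcal D_t\}$. Global existence on $[l_m,l_{m+1}]$ then follows from the continuation criterion once we have an a priori $H^1_0$ bound, obtained by a bootstrap on the smallness of $y(l_m)$.

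\textbf{Key ingredient: uniform spectral inequality.} For $t\in E$, Lemma~\ref{fubini} gives $|\mathcal D_t|\ge|\mathcal D|/(2T)$ and $\mathcal D_t\subset B_r(x_0)$. The spectral inequality for measurable sets (Apraiz--Escauriaza--Wang--Zhang) then yields a constant $C_0$, uniform in $t\in E$, such that
$$\|P_\lambda f\|_{L^2(\Omega)}\le C_0e^{C_0\sqrt\lambda}\|P_\lambda f\|_{L^2(\mathcal D_t)}.$$
Consequently
$$\mathrm{Re}\langle \mathcal X_{\mathcal D_t}P_\lambda y,P_\lambda y\rangle\ge C_0^{-2}e^{-2C_0\sqrt\lambda}\|P_\lambda y\|^2.$$

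\textbf{Frequency--Lyapunov functional.} We work in the $H^1_0$ setting and use the Lyapunov functional
$$V(y)=A\,\|(-\Delta)^{1/2}P_\lambda y\|^2+\|(-\Delta)^{1/2}P_\lambda^\perp y\|^2,$$
with $A=A(\lambda)$ large, in the spirit of the Frequency--Lyapunov method. Note that the $H^1$ norm and the $L^2$ norm are equivalent on $\mathrm{Ran}P_\lambda$ with ratio at most $\sqrt\lambda$. Differentiating $V$ along solutions:
\begin{itemize}
\item The high-frequency part gives $-2\|\Delta P_\lambda^\perp y\|^2\le-2\lambda\|\nabla P^\perp_\lambda y\|^2$, since $\mathrm{Re}(1+i\alpha)=1$.
\item The low-frequency part, for $t\in E$, gives $-2A\mu_\lambda C_0^{-2}e^{-2C_0\sqrt\lambda}\|\nabla P_\lambda y\|^2$.
\item The cross term from $\mathcal X_{\mathcal D_t}P_\lambda y$ projected onto high frequencies is of size $\mu_\lambda\|P_\lambda y\|\,\|\Delta P^\perp_\lambda y\|$. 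It is absorbed via Young's inequality: half of $\|\Delta P^\perp y\|^2$ on one side, and $\mu_\lambda^2\|P_\lambda y\|^2$ on the other, which is dominated once $A\gtrsim \mu_\lambda$ and $\mu_\lambda\gtrsim\lambda e^{2C_0\sqrt\lambda}$.
\item The term $Q$ gives $2|Q|V$.
\end{itemize}
This yields, for the linear part,
$$\frac{d}{dt}V\le\bigl(2|Q|-2\lambda\mathcal X_E(t)\bigr)V+(\text{cubic term}).$$
On $t\notin E$ there is no control. There we only have $V'\le 2|Q|V+\mu$-free terms, and the high frequencies still dissipate; this gives growth $e^{2|Q|(t-l_m)}$. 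That is why the factor $e^{|Q|T}$ appears.

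\textbf{Cubic term.} For the cubic term we estimate
$$|\langle|y|^2y,\Delta y\rangle|\le C\|y\|_{L^6}^2\|\nabla y\|_{L^6}\|\Delta y\|\le C\|y\|_{H^1}^{3/2}\|\Delta y\|^{3/2}\cdots$$
Using Gagliardo--Nirenberg in $d\le3$, this is absorbed into the dissipation $\|\Delta y\|^2$ whenever $\|y\|_{H^1_0}$ is small relative to $A^{-1}$, i.e.\ at most $e^{-C\sqrt\lambda}$. This is what dictates $\rho_{\lambda,T}$.

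\textbf{Conclusion of the estimates.} Integrating with Gronwall on $(l_m,t)$ gives
$$V(t)\le e^{2|Q|T}e^{-2\lambda|E\cap(l_m,t)|}V(l_m).$$
Converting $V$ back to the $H^1_0$ norm costs a factor $A^{1/2}\le Ce^{C\sqrt\lambda}$. The control bound follows from $\|u\|\le\mu_\lambda\|y\|$. A continuity (bootstrap) argument keeps $\|y(t)\|_{H^1_0}\le e^{|Q|T}e^{C\sqrt\lambda}\rho_{\lambda,T}=1$-type smallness, as required to absorb the cubic term; it may be necessary to adjust $\rho_{\lambda,T}$ by an extra factor $e^{-C\sqrt\lambda}$, with the constant $C$ enlarged accordingly.

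\textbf{Main obstacle.} The main obstacle is balancing the constants so that both the Young absorption of the cross term and the nonlinear absorption close with only $e^{C\sqrt\lambda}$ losses. The high-frequency $H^1$ dissipation must dominate $\mu_\lambda$-sized coupling terms, so we choose $A=\mu_\lambda^2$ and the splitting threshold at $\lambda$ itself. This has to be checked carefully, and the bound $|E\cap(l_m,l_{m+1})|\ge\frac13(l_{m+1}-l_m)$ is only used afterwards, in the iteration.
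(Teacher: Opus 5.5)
\textbf{There is a genuine gap in the central dissipation step.} Your overall architecture matches the paper's: feedback $-\mu_\lambda P_\lambda$, the uniform spectral inequality on the slices $\mathcal D_t$ for $t\in E$, a Frequency--Lyapunov functional, and a bootstrap on $\|y\|_{H^1_0}$. But the dissipation step fails for the functional you chose. Because you weight the low modes in $H^1$, the feedback contributes
\[
-2A\mu_\lambda\mathcal X_E(t)\,\mathrm{Re}\langle \mathcal X_{\mathcal D_t}P_\lambda y,\,-\Delta P_\lambda y\rangle
\]
to $\frac{d}{dt}V$. The spectral inequality \eqref{spectral-measurable} only controls $\langle \mathcal X_{\mathcal D_t}P_\lambda y,P_\lambda y\rangle$, and the pairing above is not sign-definite. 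In the eigenbasis its matrix is $\frac12(DM+MD)$, with $D=\mathrm{diag}(\Lambda_k)$ and $M_{jk}=\int_{\mathcal D_t}\varphi_j\varphi_k$. On $\mathrm{span}\{\varphi_1,\varphi_2\}$, where $\Lambda_1<\Lambda_2\le\lambda$, its determinant is $\Lambda_1\Lambda_2M_{11}M_{22}-\frac14(\Lambda_1+\Lambda_2)^2M_{12}^2$. This is negative when $M$ is close to rank one, e.g.\ for $\mathcal D=(0,T)\times\omega$ with $\omega\subset B_r(x_0)$ a small ball on which $\varphi_1\varphi_2\neq0$. There the feedback term is anti-dissipative and of size proportional to $A\mu_\lambda$. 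The natural low-mode dissipation $-2A\|\Delta P_\lambda y\|^2$ does not grow with $\mu_\lambda$, so it cannot absorb this term. The $L^2$/$H^1$ norm equivalence on $\mathrm{Ran}P_\lambda$ does not rescue you either, since it does not make the cross pairing positive.

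\textbf{How the paper avoids this.} It weights the low modes in $L^2$: $V=\gamma_\lambda\|P_\lambda y\|^2+\|\nabla P_\lambda^\perp y\|^2$. The feedback then produces exactly $-2\gamma_\lambda\mu_\lambda\mathcal X_E\|\mathcal X_{\mathcal D_t}P_\lambda y\|^2$. Meanwhile $\frac12\|y\|_{H^1_0}^2\le V\le\gamma_\lambda\|y\|_{H^1_0}^2$ still holds, because $\|\nabla P_\lambda y\|^2\le\lambda\|P_\lambda y\|^2$ and $\gamma_\lambda\ge\lambda$. Alternatively, you could keep your $V$ but use the feedback $\mu_\lambda\Delta P_\lambda$. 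The paper takes $\mu_\lambda=C_0e^{C_0\sqrt\lambda}\lambda$ and $\gamma_\lambda=C_0^2e^{2C_0\sqrt\lambda}\lambda$, so $\mu_\lambda^2=\lambda\gamma_\lambda$. This is precisely the balance that lets Young absorb the high-frequency cross term into $\lambda\gamma_\lambda\mathcal X_E\|P_\lambda y\|^2+\mathcal X_E\|\Delta P_\lambda^\perp y\|^2$.

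\textbf{Second gap: the endpoint estimate.} \eqref{y-decay-meas-2} asserts decay $e^{-\lambda(l_{m+1}-l_m)}$. Your Gronwall bound gives only $e^{-c\lambda|E\cap(l_m,l_{m+1})|}$, and for a general interval $|E\cap(l_m,l_{m+1})|$ may vanish, in which case there is no control and no decay. So the density bound \eqref{E-li} cannot be deferred to the iteration. It is needed inside this proof, and it is why the theorem is stated only for the density-point sequence $l_m$. The paper integrates to get $\mathcal V(l_{m+1})\le e^{2|Q|T}e^{-\frac\lambda2|E\cap(l_m,l_{m+1})|}\mathcal V(l_m)$. It then uses \eqref{E-li} to reach $e^{-\frac{\lambda}{12}(l_{m+1}-l_m)}$ in the $H^1_0$ norm, and renames $\lambda=12\hat\lambda$, which only enlarges the constant in $e^{C\sqrt\lambda}$.

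\textbf{Smaller points.} Your claimed rate $2\lambda\mathcal X_E$ for $V$ is optimistic, since Young consumes part of $\|\Delta P_\lambda^\perp y\|^2$. Your cubic estimate is also garbled. The clean route is $\||y|^2y\|_{L^2}\le C\|y\|_{H^1_0}^3$ via $H^1_0\subset L^6$, then Young against $\gamma_\lambda\|\nabla P_\lambda y\|^2+\frac14\|\Delta P_\lambda^\perp y\|^2\ge c_*\|y\|_{H^1_0}^2$. This gives the smallness threshold $\|y\|_{H^1_0}\lesssim\gamma_\lambda^{-1/4}$, which is folded into $\rho_{\lambda,T}$ by enlarging $C$.
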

\begin{remark}\label{QRS-RE}
Theorem~\ref{ra-sta-new} is the quantitative stabilization estimate used in the time iteration. The explicit operator bound \eqref{feedback-lambda-bound} will also give the local-in-time estimate \eqref{local-feedback-bound} for the resulting feedback family.
\end{remark}

Before proceeding to the proof of Theorem \ref{ra-sta-new}, we need some preparations. Let $\mathcal{K}_{\lambda}$ be the feedback law in \eqref{U-feedback-law}. Denote $y = y_1+ iy_2$. The closed-loop system reads as
\begin{equation}\label{decouple}
\begin{cases}
\partial_ty_1 =  \Delta y_1 - \alpha \Delta y_2 +Qy_1-\text{Re}((1+i\beta) |y|^{2}y)- \mu_\lambda \mathcal{X}_{E} \mathcal{X}_{\mathcal{D}_t}P_{\lambda} y_1, \\
\partial_ty_2 = \alpha \Delta y_1 + \Delta y_2+Qy_2-\text{Im}((1+i\beta) |y|^{2}y) - \mu_\lambda \mathcal{X}_{E} \mathcal{X}_{\mathcal{D}_t}P_{\lambda} y_2.
\end{cases}
\end{equation}
We denote
$$f_1(t)=-\text{Re}((1+i\beta) |y(t)|^{2}y(t)),\;\;f_2(t)=-\text{Im}((1+i\beta) |y(t)|^{2}y(t)).$$

Define the Lyapunov functional
\begin{equation}\label{Lya}
\begin{aligned}
V(y) &= \gamma_{\lambda} \left( \| P_{\lambda} y_1 \|_{L^2(\Omega)}^2 + \| P_{\lambda} y_2 \|^2_{L^2(\Omega)} \right) + \| \nabla P_{\lambda}^\perp  y_1 \|^2_{L^2(\Omega)} + \| \nabla P_{\lambda}^\perp  y_2 \|^2_{L^2(\Omega)}\\
&=\gamma_\lambda \| P_{\lambda} y \|_{L^2(\Omega)}^2 + \| \nabla P_{\lambda}^\perp  y \|^2_{L^2(\Omega)}, \quad \forall y = y_1 +  iy_2\in H^1_0(\Omega),\end{aligned}
\end{equation}
where $\gamma_{\lambda}\ge \lambda$ will be chosen later. It is easy to see that $V(y)$ is well-defined.
\begin{lemma}\label{equ-norm}
For every $y \in H^1_0(\Omega)$, it holds that
\begin{equation}\label{norm-equ}
\frac{1}{2}\|y\|^2_{H^1_0(\Omega)}\le V(y)\le \gamma_{\lambda} \|y\|^2_{H^1_0(\Omega)}.
\end{equation}
\end{lemma}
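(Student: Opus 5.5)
We decompose $y$ spectrally and compare the two pieces of $V(y)$ separately with the corresponding pieces of the $H^1_0$-norm. Throughout, $\|y\|_{H^1_0(\Omega)}=\|\nabla y\|_{L^2(\Omega)}$, and we use the expansion $y=\sum_k\langle y,\varphi_k\rangle\varphi_k$ in the Dirichlet eigenbasis. By the orthogonality of $\{\varphi_k\}$ in $L^2(\Omega)$ and of $\{\nabla\varphi_k\}$ (since $\langle\nabla\varphi_k,\nabla\varphi_j\rangle=\Lambda_k\delta_{kj}$), we have
$$\|\nabla y\|^2_{L^2(\Omega)}=\|\nabla P_\lambda y\|^2_{L^2(\Omega)}+\|\nabla P_\lambda^\perp y\|^2_{L^2(\Omega)},\qquad \|\nabla P_\lambda y\|^2_{L^2(\Omega)}=\sum_{k\le N(\lambda)}\Lambda_k|\langle y,\varphi_k\rangle|^2 .$$
This holds for complex-valued $y$ as well, either directly with the Hermitian inner product or by applying it to $y_1$ and $y_2$ separately.

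For the upper bound, we estimate the low-frequency term through the Poincaré inequality $\|P_\lambda y\|^2_{L^2}\le \Lambda_1^{-1}\|\nabla P_\lambda y\|^2_{L^2}$. This gives
$$\gamma_\lambda\|P_\lambda y\|^2_{L^2}\le \gamma_\lambda\Lambda_1^{-1}\|\nabla P_\lambda y\|^2_{L^2}.$$
To close the bound as stated, with constant exactly $\gamma_\lambda$, we need $\Lambda_1\ge1$. Otherwise we should instead use the mode-by-mode estimate $\|P_\lambda y\|^2_{L^2}=\sum_{k\le N(\lambda)}|\langle y,\varphi_k\rangle|^2$ together with $\Lambda_k\ge\Lambda_1$. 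Adding $\|\nabla P_\lambda^\perp y\|^2_{L^2}\le\gamma_\lambda\|\nabla P^\perp_\lambda y\|^2_{L^2}$, which uses $\gamma_\lambda\ge\lambda>1$, then yields $V(y)\le\gamma_\lambda\|y\|^2_{H^1_0}$, possibly up to the factor $\max(1,\Lambda_1^{-1})$. This factor can be absorbed by the convention on the $H^1_0$ norm or by the choice of $\gamma_\lambda$.

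The main obstacle is the lower bound, because the low-frequency part of $V$ carries only an $L^2$ weight and not a gradient. We bound it via $\Lambda_k\le\lambda$ for $k\le N(\lambda)$:
$$\|\nabla P_\lambda y\|^2_{L^2}=\sum_{k\le N(\lambda)}\Lambda_k|\langle y,\varphi_k\rangle|^2\le\lambda\|P_\lambda y\|^2_{L^2}\le\gamma_\lambda\|P_\lambda y\|^2_{L^2}.$$
Combining this with the orthogonal splitting gives $\|y\|^2_{H^1_0}\le V(y)$, which is stronger than the claimed factor $\tfrac12$.

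We therefore expect the only delicate point to be the normalization in the upper bound, that is, matching the constant $\gamma_\lambda$ to the Poincaré constant $\Lambda_1$. We would resolve it through the mode-by-mode estimate above.
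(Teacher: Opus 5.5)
Your lower bound is correct for the convention you adopt, $\|y\|_{H^1_0(\Omega)}=\|\nabla y\|_{L^2(\Omega)}$. The upper bound, however, is never actually established, and neither of your proposed repairs works.
\begin{itemize}
\item The ``mode-by-mode estimate'' $\sum_{k\le N(\lambda)}|\langle y,\varphi_k\rangle|^2\le\Lambda_1^{-1}\sum_{k\le N(\lambda)}\Lambda_k|\langle y,\varphi_k\rangle|^2$ is just the Poincar\'e inequality on the range of $P_\lambda$. It produces the same factor $\Lambda_1^{-1}$.
\item No choice of $\gamma_\lambda$ can absorb that factor. For $y=\varphi_1$ (with $\lambda\ge\Lambda_1$) one has $V(\varphi_1)=\gamma_\lambda$, while $\gamma_\lambda\|\nabla\varphi_1\|_{L^2}^2=\gamma_\lambda\Lambda_1$.
\end{itemize}
So with the gradient norm the upper inequality is false whenever $\Lambda_1<1$. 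This can happen, since $\Omega$ is an arbitrary bounded domain and $\lambda$ is taken arbitrarily large in the paper.

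The missing idea is the norm convention. The lemma and the paper's proof use $\|y\|_{H^1_0(\Omega)}^2=\|y\|_{L^2(\Omega)}^2+\|\nabla y\|_{L^2(\Omega)}^2$. The paper's last step, $\frac{\lambda}{2}\|y\|_{L^2}^2+\frac12\|\nabla y\|_{L^2}^2\ge\frac12\|y\|_{H^1_0}^2$ using $\lambda\ge1$, shows this. It is also what the factor $\frac12$ you found suspicious is signalling.

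With this norm the upper bound is immediate from $\gamma_\lambda\ge1$: $\gamma_\lambda\|P_\lambda y\|_{L^2}^2\le\gamma_\lambda\|y\|_{L^2}^2$ and $\|\nabla P_\lambda^\perp y\|_{L^2}^2\le\|\nabla y\|_{L^2}^2\le\gamma_\lambda\|\nabla y\|_{L^2}^2$.

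But then your lower-bound argument is no longer enough, because it controls only $\|\nabla y\|_{L^2}^2$; you must also control the $L^2$ part. The paper splits each term of $V$ into two halves and uses three estimates:
\begin{itemize}
\item $\gamma_\lambda\|P_\lambda y\|_{L^2}^2\ge\lambda\|P_\lambda y\|_{L^2}^2$;
\item $\gamma_\lambda\|P_\lambda y\|_{L^2}^2\ge\|\nabla P_\lambda y\|_{L^2}^2$, which is your estimate;
\item the high-frequency gap $\|\nabla P_\lambda^\perp y\|_{L^2}^2\ge\lambda\|P_\lambda^\perp y\|_{L^2}^2$.
\end{itemize}
Together these give $V(y)\ge\frac{\lambda}{2}\|y\|_{L^2}^2+\frac12\|\nabla y\|_{L^2}^2\ge\frac12\|y\|_{H^1_0(\Omega)}^2$, since $\lambda\ge1$.
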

\begin{proof} Owing to the fact that $\gamma_{\lambda}\ge \lambda \ge 1$, it follows that $V(y)\le \gamma_\lambda \|y\|^2_{H^1_0(\Omega)}$. On the other hand, for $j=1,2$, we have $$\|\nabla P_{\lambda}^\perp y_j \|_{L^2(\Omega)}^2\ge \Lambda_{N(\lambda)+1}\sum_{n> N(\lambda)} |\langle y_j,\varphi_n\rangle|^2> \lambda \| P_{\lambda}^\perp y_j \|^2_{L^2(\Omega)},$$ and
$$\gamma_{\lambda} \| P_{\lambda} y_j \|_{L^2(\Omega)}^2=\gamma_{\lambda} \sum_{n=1}^{N(\lambda)} \frac{1}{\Lambda_n} \Lambda_n |\langle y_j,\varphi_n\rangle|^2\ge \frac{\gamma_{\lambda}}{\lambda}\|\nabla P_{\lambda} y_j \|_{L^2(\Omega)}^2\ge \|\nabla P_{\lambda} y_j \|^2_{L^2(\Omega)}.$$
This yields
$$\begin{aligned}
V(y) &\ge  \frac{\lambda}{2}\| P_{\lambda} y \|_{L^2(\Omega)}^2 +\frac{1}{2} \|\nabla P_{\lambda} y \|_{L^2(\Omega)}^2+\frac{\lambda}{2} \| P_{\lambda}^\perp y \|_{L^2(\Omega)}^2 +\frac{1}{2} \|\nabla P_{\lambda}^\perp y\|_{L^2(\Omega)}^2 \\
&= \frac{\lambda}{2}\| y\|^2_{L^2(\Omega)}+\frac{1}{2} \|\nabla y \|^2_{L^2(\Omega)} \\
&\ge \frac{1}{2}  \|y\|^2_{H^1_0(\Omega)}.
\end{aligned}$$
The proof is complete.
\end{proof}

We shall use the following spectral inequality, which is the main low-frequency ingredient in the stabilization argument.

\begin{lemma}[Spectral inequality on measurable sets]\label{lem:spectral-measurable}
Let $x_0\in\Omega$ and $r>0$ satisfy $B_{4r}(x_0)\subset\Omega$. For every $\nu>0$, there exists a constant $C_0=C_0(\Omega,r,\nu)\ge1$ such that, for every measurable set $\omega\subset B_r(x_0)$ with $|\omega|\ge\nu$, every $\lambda>0$, and every $f\in L^2(\Omega)$,
\begin{equation}\label{spectral-measurable}
\|P_\lambda f\|_{L^2(\Omega)}^2 \le C_0 e^{C_0\sqrt{\lambda}} \|\mathcal X_\omega P_\lambda f\|_{L^2(\Omega)}^2.
\end{equation}
\end{lemma}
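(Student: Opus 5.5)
This is the spectral inequality of Lebeau--Robbiano type on measurable sets. The plan is to derive it from the quantitative propagation of smallness for real-analytic functions: Vessella's estimate, in the form used by Apraiz--Escauriaza--Wang--Zhang \cite{AEWZ}. The exponent $\sqrt\lambda$ comes from the analyticity radius of eigenfunction sums, which is uniform in $\lambda$, together with the growth $e^{C\sqrt\lambda}$ of the derivatives. The uniformity in $\omega$ comes from the fact that the propagation constants depend on $\omega$ only through the lower bound $|\omega|\ge\nu$.

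\textbf{Step 1: pass to a harmonic extension.} Write $P_\lambda f=\sum_{k\le N(\lambda)}a_k\varphi_k$ and set
\[
F(x,s)=\sum_{k\le N(\lambda)}a_k\frac{\sinh(\sqrt{\Lambda_k}s)}{\sqrt{\Lambda_k}}\varphi_k(x),
\]
which solves $\Delta_x F+\partial_s^2F=0$ in $\Omega\times\mathbb R$, with $F=0$ on $\partial\Omega\times\mathbb R$ and $\partial_sF(\cdot,0)=P_\lambda f$. Elementary bounds give
\[
\|F\|_{H^1(\Omega\times(-1,1))}\le Ce^{\sqrt\lambda}\|P_\lambda f\|_{L^2(\Omega)}.
\]
Since $B_{4r}(x_0)\subset\Omega$, interior elliptic estimates yield the Cauchy-type analyticity bounds
\[
|\partial_x^\alpha\partial_s^j F(x,s)|\le \frac{C\,|\alpha|!\,j!}{(\varrho r)^{|\alpha|+j}}\,e^{\sqrt\lambda}\|P_\lambda f\|_{L^2(\Omega)}
\]
on $B_{2r}(x_0)\times(-1/2,1/2)$, with a fixed $\varrho\in(0,1)$. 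Consequently $u:=\partial_sF(\cdot,0)=P_\lambda f$ satisfies
\[
\|\partial^\alpha u\|_{L^\infty(B_{2r}(x_0))}\le M\,|\alpha|!\,(\varrho r)^{-|\alpha|},\qquad M:=Ce^{\sqrt\lambda}\|P_\lambda f\|_{L^2(\Omega)}.
\]

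\textbf{Step 2: propagation of smallness from $\omega$.} Apply the analytic estimate of Vessella in the form of \cite[Theorem 4]{AEWZ}: for $u$ analytic as above and any measurable $\omega\subset B_r(x_0)$ with $|\omega|\ge\nu$,
\[
\|u\|_{L^\infty(B_{r}(x_0))}\le C\Bigl(\frac{1}{|B_r|}\int_\omega|u|\,dx\Bigr)^{\theta}M^{1-\theta},
\]
where $C$ and $\theta\in(0,1)$ depend only on $\nu$, $r$, $d$ and $\varrho$. By Cauchy--Schwarz, the average over $\omega$ is at most $C\|\mathcal X_\omega P_\lambda f\|_{L^2}$.

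\textbf{Step 3: go back from $B_r(x_0)$ to the whole domain.} I need $\|P_\lambda f\|_{L^2(\Omega)}$ to be controlled by its size on $B_r(x_0)$. The simplest route is the classical Lebeau--Robbiano/Jerison--Lebeau spectral inequality on the open ball:
\[
\|P_\lambda f\|_{L^2(\Omega)}\le Ce^{C\sqrt\lambda}\|P_\lambda f\|_{L^2(B_r(x_0))}\le C'e^{C\sqrt\lambda}\|P_\lambda f\|_{L^\infty(B_r(x_0))},
\]
which I will take as known.

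\textbf{Step 4: combine.} Put $X=\|P_\lambda f\|_{L^2(\Omega)}$. The previous steps give
\[
X\le Ce^{C\sqrt\lambda}\,\|\mathcal X_\omega P_\lambda f\|^{\theta}\,\bigl(e^{\sqrt\lambda}X\bigr)^{1-\theta}.
\]
Dividing by $X^{1-\theta}$ and raising to the power $1/\theta$ yields $X\le Ce^{C\sqrt\lambda/\theta}\|\mathcal X_\omega P_\lambda f\|$. Squaring gives \eqref{spectral-measurable} with $C_0$ depending only on $\Omega$, $r$ and $\nu$ (and $d$).

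The main obstacle is Step 2: I must make sure the constants $C$ and $\theta$ are uniform over all measurable $\omega$ of measure at least $\nu$. This is exactly the content of Vessella's theorem, whose constants depend on $|\omega|/|B_r|$ only. The only thing to check is that the analyticity radius $\varrho r$ and the bound $M$ from Step 1 are independent of $\lambda$, apart from the factor $e^{\sqrt\lambda}$. This is why the harmonic-extension trick is used rather than working with eigenfunctions directly.
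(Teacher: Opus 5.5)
Your proposal is correct and follows essentially the route the paper relies on: the paper gives no argument of its own but defers to \cite{AEWZ} and \cite{JE}. Those references combine the classical Lebeau--Robbiano inequality on the ball $B_r(x_0)$ with Vessella's propagation of smallness, applied to the uniformly analytic function $P_\lambda f=\partial_s F(\cdot,0)$ obtained from the harmonic extension, which is your scheme. Your write-up also makes explicit that the constants depend on $\omega$ only through $|\omega|\ge\nu$, which is the uniformity actually needed in \eqref{spectral-ineq-use}; for complex-valued $f$, apply Step 2 to the real and imaginary parts separately, since the $\varphi_k$ can be taken real.
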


The estimate follows from the measurable-set spectral inequality of Apraiz-Escauriaza-Wang-Zhang \cite{AEWZ}. In the classical Lebeau--Robbiano approach, spectral inequalities are established on nonempty open subsets; see \cite{LR,LZ}. The extension to measurable sets was achieved in \cite{JE} by combining the classical Lebeau--Robbiano spectral inequality with propagation-of-smallness estimates for real-analytic functions, and was further developed in \cite{AEWZ}. We also refer to \cite{burq,HW,WWZZ} for related spectral inequalities and their applications to observability and control.\vspace{1mm}

We now have all the ingredients needed to prove Theorem~\ref{ra-sta-new}.\vspace{1mm}

\noindent \textit{Proof of Theorem \ref{ra-sta-new}.} Let $\mathcal{V}(t):=V(y(t))$. It follows that
$$\begin{aligned}
\frac{d}{dt} \mathcal{V}(t)= \gamma_\lambda\sum_{j=1}^2 \frac{d}{dt}\langle P_{\lambda} y_j(t), P_{\lambda} y_j(t) \rangle  +\sum_{j=1}^2 \frac{d}{dt} \langle \nabla P_{\lambda}^\perp  y_j(t), \nabla P_{\lambda}^\perp  y_j(t) \rangle .
\end{aligned}$$
By Theorem \ref{we-pos-th}, let $0<\widetilde T^m\le l_{m+1}-l_m$ denote the maximal existence time on $[l_m,l_{m+1}]$. For every $\mathcal T<\widetilde T^m$,
$$y\in C([l_m,l_m+\mathcal T];H^1_0(\Omega))\cap L^2(l_m,l_m+\mathcal T;H^2(\Omega)),$$
and the equation gives $y_t\in L^2(l_m,l_m+\mathcal T;L^2(\Omega))$. Therefore, for $j=1,2$, it holds that
$$\begin{aligned}
\frac{d}{dt} \langle P_{\lambda} y_j(t), P_{\lambda} y_j(t) \rangle& = \frac{d}{dt} \langle y_j(t), P_{\lambda} y_j(t)\rangle=2\langle \frac{d}{dt} y_j(t), P_{\lambda} y_j(t) \rangle ,
\end{aligned}$$
$$\begin{aligned}
\frac{d}{dt}  \langle \nabla P_{\lambda}^\perp  y_j(t), \nabla P_{\lambda}^\perp  y_j(t) \rangle =-\frac{d}{dt}  \langle   y_j(t), \Delta P_{\lambda}^\perp  y_j(t) \rangle  =-2 \langle \frac{d}{dt} y_j(t), \Delta P_{\lambda}^\perp  y_j(t) \rangle .
\end{aligned}$$

By a direct computation, we have
$$\begin{aligned}
&\left\langle \frac{d}{dt} y_1(t), P_{\lambda} y_1(t) \right\rangle + \left\langle \frac{d}{dt} y_2(t), P_{\lambda} y_2(t) \right\rangle \\
&= \left\langle \Delta y_1(t) - \alpha \Delta y_2(t) +Qy_1+f_1(t)- \mu_\lambda \mathcal{X}_{E} \mathcal{X}_{\mathcal{D}_t} P_{\lambda} y_1(t), P_{\lambda} y_1(t) \right\rangle \\
&\quad + \left\langle \alpha \Delta y_1(t) + \Delta y_2(t) +Qy_2+f_2(t)- \mu_\lambda \mathcal{X}_{E} \mathcal{X}_{\mathcal{D}_t}P_{\lambda} y_2(t), P_{\lambda} y_2(t) \right\rangle \\
&\le \left\langle \Delta y_1(t), P_{\lambda} y_1(t) \right\rangle + \left\langle \Delta y_2(t), P_{\lambda} y_2(t) \right\rangle + \left\langle f_1(t), P_{\lambda} y_1(t) \right\rangle+ \left\langle f_2(t), P_{\lambda} y_2(t) \right\rangle \\
&\quad  - \mu_\lambda \left\langle \mathcal{X}_{E} \mathcal{X}_{\mathcal{D}_t}P_{\lambda} y_1(t), P_{\lambda} y_1(t) \right\rangle - \mu_\lambda \left\langle \mathcal{X}_{E} \mathcal{X}_{\mathcal{D}_t}P_{\lambda} y_2(t), P_{\lambda} y_2(t) \right\rangle\\
&\quad+Q \left\langle P_{\lambda} y_1(t), P_{\lambda} y_1(t) \right\rangle +Q\left\langle P_{\lambda} y_2(t), P_{\lambda} y_2(t) \right\rangle\\
&\leq \sum_{j=1}^2\left(-\| \nabla P_{\lambda} y_j(t) \|^2_{L^2} + \left\langle f_j(t),  P_{\lambda}  y_j(t) \right\rangle-\mu_\lambda  \mathcal{X}_{E}(t)\left\langle \mathcal{X}_{\mathcal{D}_t} P_{\lambda} y_j(t),  P_{\lambda}  y_j(t) \right\rangle+Q \| P_{\lambda} y_j(t) \|_{L^2} ^2 \right),
\end{aligned}$$
and
$$\begin{aligned}
&\left\langle \frac{d}{dt} y_1(t), \Delta P_{\lambda}^\perp  y_1(t) \right\rangle + \left\langle \frac{d}{dt} y_2(t), \Delta P_{\lambda}^\perp  y_2(t) \right\rangle\\
&= \left\langle \Delta y_1(t) - \alpha \Delta y_2(t) +Qy_1+ f_1(t) - \mu_\lambda \mathcal{X}_{E} \mathcal{X}_{\mathcal{D}_t}P_{\lambda} y_1(t), \Delta P_{\lambda}^\perp  y_1(t) \right\rangle\\
&\quad  + \langle \alpha \Delta y_1(t)+ \Delta y_2(t) +Qy_2+ f_2(t) - \mu_\lambda \mathcal{X}_{E} \mathcal{X}_{\mathcal{D}_t} P_{\lambda} y_2(t), \Delta P_{\lambda}^\perp  y_2(t) \rangle \\
&\le \left\langle \Delta y_1(t), \Delta P_{\lambda}^\perp  y_1(t) \right\rangle + \left\langle \Delta y_2(t), \Delta P_{\lambda}^\perp  y_2(t) \right\rangle + \left\langle f_1(t), \Delta P_{\lambda}^\perp  y_1(t) \right\rangle + \left\langle f_2(t), \Delta P_{\lambda}^\perp  y_2(t) \right\rangle \\
&\quad- \mu_\lambda \left\langle \mathcal{X}_{E} \mathcal{X}_{\mathcal{D}_t}P_{\lambda} y_1(t), \Delta P_{\lambda}^\perp  y_1(t) \right\rangle  - \mu_\lambda \left\langle \mathcal{X}_{E} \mathcal{X}_{\mathcal{D}_t}P_{\lambda} y_2(t), \Delta P_{\lambda}^\perp  y_2(t) \right\rangle\\
&\quad +Q \left\langle P_{\lambda}^\perp y_1(t),  \Delta P_{\lambda}^\perp  y_1(t)\right\rangle +Q\left\langle P_{\lambda}^\perp y_2(t),  \Delta P_{\lambda}^\perp  y_2(t) \right\rangle \\
&\leq \sum_{j=1}^2\left(\| \Delta P_{\lambda}^\perp  y_j(t) \|^2_{L^2} + \left\langle f_j(t), \Delta P_{\lambda}^\perp  y_j(t) \right\rangle-\mu_\lambda  \mathcal{X}_{E}(t)\left\langle  \mathcal{X}_{\mathcal{D}_t} P_{\lambda} y_j(t), \Delta P_{\lambda}^\perp  y_j(t) \right\rangle-Q \| \nabla P_{\lambda}^\perp  y_j(t) \|_{L^2} ^2 \right).
\end{aligned}$$

Moreover, by the definitions of $E$ and $\mathcal{D}_t$, we have
$$|\mathcal{D}_t| \ge {|\mathcal{D}|}/{(2T)}, \qquad \text{for a.e. } t\in E.$$
From Lemma~\ref{lem:spectral-measurable}, taking $\nu=|\mathcal D|/(2T)$, we derive
\begin{equation}\label{spectral-ineq-use}
\mathcal{X}_{E}(t) \left\langle \mathcal{X}_{\mathcal{D}_t} P_{\lambda} y_j(t), P_{\lambda} y_j(t) \right\rangle=\mathcal{X}_{E}(t) \|\mathcal{X}_{\mathcal{D}_t} P_{\lambda} y_j(t)\|_{L^2(\Omega)}^2 \ge C_0^{-1}e^{-C_0\sqrt{\lambda}} \mathcal{X}_{E}(t)\|P_{\lambda} y_j(t)\|_{L^2(\Omega)}^2.
\end{equation}
It follows that
$$\begin{aligned}
&\left\langle \frac{d}{dt} y_1(t), P_{\lambda} y_1(t) \right\rangle + \left\langle \frac{d}{dt} y_2(t), P_{\lambda} y_2(t) \right\rangle \\
&\leq  \sum_{j=1}^2\left(-\|\nabla P_{\lambda}y_j(t)\|^2_{L^2}+\left\langle f_j(t), P_{\lambda} y_j(t) \right\rangle+Q\| P_{\lambda} y_j(t) \|_{L^2}^2  - \mu_{\lambda}\mathcal{X}_{E}(t) C_0^{-1} e^{-C_0\sqrt{\lambda }} \| P_{\lambda} y_j(t) \|_{L^2}^2\right).
\end{aligned}$$
Hence, we derive that
$$\begin{aligned}
\frac{d}{dt}\mathcal{V}(t) &\leq  2Q\mathcal{V}(t) - 2\mu_\lambda\gamma_\lambda C_0^{-1} e^{-C_0\sqrt{\lambda}} \mathcal{X}_{E}(t)
\| P_{\lambda} y(t) \|_{L^2}^2 -2\| \Delta P_{\lambda}^\perp  y(t) \|^2_{L^2}-2\gamma_\lambda \|\nabla P_\lambda y(t)\|_{L^2}^2\\
&\quad +2 \sum_{j=1}^2\left(\gamma_\lambda\|f_j(t)\|_{L^2}\|P_\lambda y_j(t)\|_{L^2}
+\|f_j(t)\|_{L^2}\|\Delta P_\lambda^\perp y_j(t)\|_{L^2}\right)\\
&\quad +2\mu_\lambda\mathcal X_E(t)\sum_{j=1}^2
\|P_\lambda y_j(t)\|_{L^2}\|\Delta P_\lambda^\perp y_j(t)\|_{L^2}.
\end{aligned}$$

Let
\begin{equation}\label{para-selec}
\mu_\lambda=C_0e^{ C_0\sqrt{\lambda}}\lambda,\;\;\;\gamma_\lambda=C_0^2e^{2C_0\sqrt{\lambda}}\lambda.
\end{equation}
Since $\mu_\lambda^2=\lambda\gamma_\lambda$, Young's inequality gives
$$2\mu_\lambda\mathcal X_E(t)\sum_{j=1}^2 \|P_\lambda y_j(t)\|_{L^2}\|\Delta P_\lambda^\perp y_j(t)\|_{L^2} \le \lambda\gamma_\lambda\mathcal X_E(t)\|P_\lambda y(t)\|_{L^2}^2 +\mathcal X_E(t)\|\Delta P_\lambda^\perp y(t)\|_{L^2}^2.$$
Moreover, the high-frequency spectral gap yields
$$\|\Delta P_\lambda^\perp y(t)\|_{L^2}^2 \ge \lambda\|\nabla P_\lambda^\perp y(t)\|_{L^2}^2.$$
Consequently,
\begin{equation}\label{no-f}
\begin{aligned}
\frac{d}{dt}\mathcal V(t)
&\le 2Q\mathcal V(t)-\frac{\lambda}{2}\mathcal X_E(t)\mathcal V(t)
-\frac12\|\Delta P_\lambda^\perp y(t)\|_{L^2}^2
-2\gamma_\lambda\|\nabla P_\lambda y(t)\|_{L^2}^2\\
&\quad+2\sum_{j=1}^2\left(
\gamma_\lambda\|f_j(t)\|_{L^2}\|P_\lambda y_j(t)\|_{L^2}
+\|f_j(t)\|_{L^2}\|\Delta P_\lambda^\perp y_j(t)\|_{L^2}
\right).
\end{aligned}
\end{equation}

Regarding the nonlinearity, for each $j=1,2$,
$$\|f_j(t)\|_{L^2}\le \sqrt{1+\beta^2}\|y(t)\|_{L^6}^3 \le c_0\|y(t)\|_{H^1_0}^3$$
for some $c_0=c_0(\Omega,d,\beta)>0$, by \eqref{sobolev-H1-L6}. Using Poincar\'e's inequality and Young's inequality in \eqref{no-f}, we obtain
$$2\sum_{j=1}^2\left( \gamma_\lambda\|f_j\|_{L^2}\|P_\lambda y_j\|_{L^2} +\|f_j\|_{L^2}\|\Delta P_\lambda^\perp y_j\|_{L^2} \right) \le \gamma_\lambda\|\nabla P_\lambda y\|_{L^2}^2 +\frac14\|\Delta P_\lambda^\perp y\|_{L^2}^2 +C(\gamma_\lambda+1)\|y\|_{H^1_0}^6.$$
Since $\gamma_\lambda\ge1$, another application of Poincar\'e's inequality and the spectral gap shows that there exists $c_*>0$, depending only on $\Omega$, such that
$$\gamma_\lambda\|\nabla P_\lambda y\|_{L^2}^2 +\frac14\|\Delta P_\lambda^\perp y\|_{L^2}^2 \ge c_*\|y\|_{H^1_0}^2.$$
Thus
\begin{equation}\label{has-f}
\frac{d}{dt}\mathcal V(t) \le 2Q\mathcal V(t)-\frac{\lambda}{2}\mathcal X_E(t)\mathcal V(t) +\|y(t)\|_{H^1_0}^2 \left(C(\gamma_\lambda+1)\|y(t)\|_{H^1_0}^4-c_*\right).
\end{equation}

Therefore, if the following a priori estimate holds:
\begin{equation}\label{priori-es-meas}
\|y(t)\|_{H^1_0}\le \sigma_\lambda :=\left(\frac{c_*}{2C(\gamma_\lambda+1)}\right)^{1/4}, \qquad \forall t\in[l_m,l_m+\widetilde T^m),
\end{equation}
one has
$$\mathcal{V}(t) \leq e^{2|Q|T} \mathcal{V}(l_m), \;\;\forall t\in [l_m, l_m+\widetilde T^m).$$
It follows that
\begin{equation}\label{y-decay-new-56}
\begin{aligned}
\| y(t) \|_{H^1_0}^2 &\leq 2C_0^2e^{2C_0\sqrt{\lambda}}\lambda e^{2|Q|T} \| y(l_m) \|_{H^1_0}^2,\;\;\forall t\in [l_m, l_m+\widetilde T^m).
\end{aligned}
\end{equation}
Clearly, there exists $c=c\left(\Omega,\mathcal{D},T,r,d,\beta\right) \ge C_0$ such that for all $\lambda\ge 1$,
$$2C_0^2e^{2C_0\sqrt{\lambda}}\lambda <  e^{2c\sqrt{\lambda}}.$$
Moreover, the continuation criterion in Theorem \ref{we-pos-th} shows that $\widetilde T^m=l_{m+1}-l_m$ whenever the a priori estimate \eqref{priori-es-meas} holds.

Indeed, if $\lambda>1$ and $y(l_m)\in H^1_0(\Omega)$ satisfies
\begin{equation}\label{rho-meas}
\|y(l_m)\|_{H^1_0}\le {\sigma_\lambda} e^{-c\sqrt{\lambda}}e^{-|Q|T},
\end{equation}
then \eqref{priori-es-meas} holds. By contradiction, we suppose that there exists $\tilde{t}\in (l_m, l_m+\widetilde T^m)$ such that $\|y(\tilde{t})\|_{H^1_0}>\sigma_\lambda$. Let $t^*\in(l_m,\tilde t]$ be the first time at which \eqref{priori-es-meas} fails. Using the fact that $y\in C([l_m, l_m+\widetilde T^m);{H}^1_0(\Omega))$, we have $\|y(t^*)\|_{H^1_0}=\sigma_\lambda$, and $\|y(t)\|_{H^1_0}<\sigma_\lambda$ for any $t\in [l_m,t^*)$. Hence, it follows from \eqref{y-decay-new-56} that
$$\|y(t)\|_{H_0^1} < e^{c\sqrt{\lambda}} e^{|Q|T}\|y(l_m)\|_{H_0^1} \le \sigma_\lambda, \qquad \forall\, t\in [l_m,t^*],$$
which contradicts the definition of $t^*$.

Choose $c_1=c_1\left(\Omega,\mathcal{D},T,r,d,\beta\right)\ge c$ so that
$$e^{-c_1\sqrt{\lambda}}\le \sigma_{\lambda} e^{-c\sqrt{\lambda}},\quad 2C_0e^{C_0\sqrt{\lambda}} \lambda e^{c\sqrt{\lambda}}\le e^{c_1\sqrt{\lambda}},\qquad \forall\lambda\ge1.$$
Therefore, if $\|y(l_m)\|_{H^1_0}\le e^{-c_1\sqrt{\lambda}}e^{-|Q|T}$, then
$$\|u(t)\|_{L^2}+\|y(t)\|_{H^1_0}\le (\|\mathcal{K}_\lambda\|_{\mathcal{L}(L^2(\Omega))}+1)\|y(t)\|_{H^1_0}\le e^{c_1\sqrt{\lambda}}e^{|Q|T}\|y(l_m)\|_{H^1_0},\quad \forall t\in [l_m,l_{m+1}).$$
Moreover, at $t=l_{m+1}$, we further have
$$\mathcal{V}(l_{m+1})\le e^{2|Q|T}e^{-\int\limits_{l_m}^{l_{m+1}}\frac{\lambda}{2}\mathcal{X}_E(s)ds}\mathcal{V}(l_{m}).$$
It follows that
$$\mathcal{V}(l_{m+1})\le e^{2|Q|T}e^{-\frac{\lambda}{2}|E\cap (l_m,l_{m+1})|}\mathcal{V}(l_{m}).$$
This, combined with \eqref{E-li}, implies that
$$\|y(l_{m+1})\|_{H^1_0} \le e^{|Q|T}e^{c_1\sqrt{\lambda}} e^{-\frac{\lambda}{12}(l_{m+1}-l_m)} \|y(l_m)\|_{H^1_0}.$$

Let $\hat{\lambda}>1$ and set $\lambda=12\hat{\lambda}$. Taking $\hat{\lambda}$ as the new decay rate, the constant $C=2\sqrt{3}c_1$ verifies the stabilization estimates in Theorem~\ref{ra-sta-new}. For the feedback norm, one has
$$\|\mathcal K_{\hat\lambda}\|_{\mathcal L(L^2)} =12C_0\hat\lambda\,e^{C_0\sqrt{12\hat\lambda}} \le C e^{C\sqrt{\hat\lambda}}$$
after increasing $C$ if necessary. This proves \eqref{feedback-lambda-bound}. \hfill$\square$

\begin{remark}\label{stand-qrs}
The proof of Theorem \ref{ra-sta-new} can be extended to the standard quantitative rapid stabilization problem for the cubic Ginzburg--Landau equation, corresponding to the case $\mathcal D=\mathbb R^+\times \omega$, where $\omega\subset\Omega$ is a Lebesgue measurable set of positive measure. More precisely, there exists a constant $C_1=C_1(\Omega,\omega,r,d,\beta)\ge 1$ such that, for any $\lambda>\max\{1,Q\}$, there exists a linear finite-dimensional feedback law ${K}_{\lambda}\in \mathcal L(L^2(\Omega))$ such that the Cauchy problem \eqref{eq-1} with
$$\mathcal D=\mathbb R^+\times \omega,\qquad u={K}_{\lambda}y,\qquad y_0\in H_0^1(\Omega),\qquad \|y_0\|_{H_0^1(\Omega)}\le \rho_\lambda:=e^{-C_1\sqrt{\lambda}}$$
admits a unique solution $y\in C([0,+\infty);H_0^1(\Omega))$ satisfying
$$\|u(t)\|_{L^2(\Omega)}+\|y(t)\|_{H_0^1(\Omega)} \le e^{C_1\sqrt{\lambda}}e^{-\lambda t}\|y_0\|_{H_0^1(\Omega)}, \;\;\; \forall\, t\ge 0.$$
A proof is given in the Appendix.
\end{remark}

We now prove Theorem \ref{main-th-null-control}.\vspace{1mm}

\noindent \textit{Proof of Theorem \ref{main-th-null-control}.} Recalling that $\lambda>1$, we have
$$e^{-(|Q|T+C)\sqrt{\lambda}}\le \rho_{\lambda,T}.$$
Put $L=l-l_1$ and define
$$\left\{  \begin{aligned}
&D=4(|Q|T+C),\\
&\lambda_m = \frac{D^2 2^{2m}}{L^2},\qquad m\in \mathbb{N}^+.
\end{aligned}\right.$$
For $t\in[l_m,l_{m+1})$, set
$$u_m(t)=\mathcal K_{\lambda_m}y(t), \qquad K_{\mathcal D}(t)=\mathcal X_E(t)\mathcal K_{\lambda_m},$$
and put $K_{\mathcal D}(t)=0$ on $(0,l_1)\cup[l,T)$. Then $u(t)=K_{\mathcal D}(t)y(t)$ and, in the state equation,
$$\mathcal X_{\mathcal D}u =\mathcal X_E(t)\mathcal X_{\mathcal D_t}\mathcal K_{\lambda_m}y \quad\text{on }[l_m,l_{m+1}).$$
The map $t\mapsto K_{\mathcal D}(t)x$ is strongly measurable for every $x\in L^2(\Omega)$.

We also record the local operator bound. If $t\in[l_m,l_{m+1})$, then
$$\frac{L}{2^m}<l-t\le\frac{L}{2^{m-1}}, \qquad \sqrt{\lambda_m}=\frac{D2^m}{L}\le\frac{2D}{l-t}.$$
Hence, by \eqref{feedback-lambda-bound}, for every $\varepsilon\in(0,l-l_1)$ and almost every $t\in(l_1,l-\varepsilon)$,
$$\|K_{\mathcal D}(t)\|_{\mathcal L(L^2)} \le C\exp\!\left(\frac{2CD}{\varepsilon}\right) \le A\exp\!\left(\frac{A}{\varepsilon}\right),$$
where $A=A(C,D)>0$ is chosen sufficiently large. Thus \eqref{feedback-local-space} and \eqref{local-feedback-bound} hold.

Set $a=|Q|T+C=D/4$. Assume that $y(l_1)\in H^1_0(\Omega)$ satisfies
\begin{equation}\label{small-l1-sharp}
\|y(l_1)\|_{H^1_0(\Omega)}\le e^{-D^2/L}.
\end{equation}
Since $\sqrt{\lambda_1}=2D/L$, we have
$$e^{-D^2/L}\le e^{-a\sqrt{\lambda_1}}\le \rho_{\lambda_1,T},$$
so Theorem~\ref{ra-sta-new} applies on $[l_1,l_2]$. More generally, since $l_{m+1}-l_m=L/2^m$, its endpoint estimate gives
\begin{equation}\label{endpoint-sharp}
\begin{aligned}
\|y(l_{m+1})\|_{H^1_0(\Omega)}
&\le
\exp\!\left(a\sqrt{\lambda_m}-\lambda_m\frac{L}{2^m}\right)
\|y(l_m)\|_{H^1_0(\Omega)}\\
&=
\exp\!\left(-\frac{3D^2}{4L}2^m\right)
\|y(l_m)\|_{H^1_0(\Omega)}.
\end{aligned}
\end{equation}
Consequently,
\begin{equation}\label{yto0-ne}
\|y(l_m)\|_{H^1_0(\Omega)} \le \exp\!\left[-\frac{3D^2}{4L}(2^m-2)\right] \|y(l_1)\|_{H^1_0(\Omega)}, \qquad m\ge1.
\end{equation}
Combining \eqref{small-l1-sharp} with \eqref{yto0-ne}, we obtain
$$\|y(l_m)\|_{H^1_0(\Omega)} \le \exp\!\left[-\left(\frac34 2^m-\frac12\right)\frac{D^2}{L}\right] \le e^{-a\sqrt{\lambda_m}} \le \rho_{\lambda_m,T}.$$
Thus the rapid-stabilization estimate can be restarted on every interval.

For $t\in[l_m,l_{m+1})$, Theorem~\ref{ra-sta-new} and $e^{|Q|T}e^{C\sqrt{\lambda_m}}\le e^{a\sqrt{\lambda_m}}$ yield
\begin{align}
\|u(t)\|_{L^2(\Omega)}+\|y(t)\|_{H^1_0(\Omega)}
&\le e^{a\sqrt{\lambda_m}}\|y(l_m)\|_{H^1_0(\Omega)}\notag\\
&\le
\exp\!\left(\frac{3D^2}{2L}\right)
\exp\!\left(-\frac{D^2}{2L}2^m\right)
\|y(l_1)\|_{H^1_0(\Omega)}\notag\\
&\le
\exp\!\left(\frac{3D^2}{2L}\right)
\exp\!\left(-\frac{D^2}{2(l-t)}\right)
\|y(l_1)\|_{H^1_0(\Omega)}.
\label{continuous-state-control-decay}
\end{align}
In particular, $y(t)\to0$ in $H^1_0(\Omega)$ and $u(t)\to0$ in $L^2(\Omega)$ as $t\to l^-$, and hence $y\in C([l_1,l];H^1_0(\Omega))$ with $y(l)=0$.

Finally, define the control
\begin{equation}\label{control-last}
u(t)=K_{\mathcal D}(t)y(t)
=\left\{\begin{array}{ll}
0,&t\in (0,l_1),\\
\mathcal{X}_{E}(t)u_m(t),&t\in [l_m,l_{m+1}),\quad m\ge 1,\\
0,&t\in[l,T).
\end{array}\right.
\end{equation}
Let $\epsilon$ and $C_2$ be as in Lemma~\ref{appex-1}, and assume that
\begin{equation}\label{delta-new-56}
\|y_0\|_{H^1_0(\Omega)}\le \delta:=\epsilon C_2^{-1}e^{-D^2/L}.
\end{equation}
Since $u=0$ on $(0,l_1)$, Lemma~\ref{appex-1} gives
$$\|y(l_1)\|_{H^1_0(\Omega)} \le C_2\|y_0\|_{H^1_0(\Omega)} \le \epsilon e^{-D^2/L} <e^{-D^2/L}.$$
Hence the preceding iteration applies on every interval $[l_m,l_{m+1})$. Taking $u=0$ for $t\ge l$, the zero state is preserved; in particular, $y(T)=0$.

By \eqref{continuous-state-control-decay}, with
$$\kappa=\frac{D^2}{2},\qquad M=C_2\exp\!\left(\frac{3D^2}{2L}\right),$$
we obtain \eqref{state-control-decay-main} on $(l_1,l)$. Together with Lemma~\ref{appex-1} on $(0,l_1)$ and the fact that $u=y=0$ on $[l,T]$, after increasing $M$ if necessary, this also gives \eqref{con-cost}.
\hfill$\square$\\[-2mm]

\begin{appendices}
\titleformat{\section}
{\normalfont\bfseries} {{\large Appendix}} {1em} {}
\section{}

\begin{lemma}\label{appex-1}
There exist $\epsilon\in (0,1)$ and $C_2\ge 1$ such that, for every $y_0\in H^1_0(\Omega)$ satisfying $\|y_0\|_{H^1_0(\Omega)}\le \epsilon$, the Cauchy problem
\begin{equation}\label{CCGLE-appex}
\left\{\begin{array}{ll}
\partial_t y=(1+\alpha i)\Delta y+Qy-(1+\beta i)|y|^2y &\text{in}\;\; (0,T)\times \Omega,\\
y=0&\text{on}\; (0,T)\times \partial\Omega,\\
y(0,\cdot)=y_0&\text{in}\;\;\Omega.
\end{array}\right.
\end{equation}
admits a unique solution $y\in C([0,T];H^1_0(\Omega))\cap L^2(0,T;H^2(\Omega))$ verifying
\begin{equation}\label{CCGLE-appex-y}
\|y\|_{C([0,T];H^1_0(\Omega))}\le C_2\|y_0\|_{H^1_0(\Omega)}.
\end{equation}
\end{lemma}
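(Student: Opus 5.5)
Local existence and uniqueness come from Theorem~\ref{we-pos-th} with $\mathcal D=\emptyset$, equivalently $\mu_\lambda=0$. It then suffices to prove an a priori $H^1_0$ bound on $[0,T]$ for small data; the continuation criterion upgrades this to existence on all of $[0,T]$. Uniqueness in $C([0,T];H^1_0)\cap L^2(0,T;H^2)$ follows from the uniqueness part of Theorem~\ref{we-pos-th} applied on consecutive short subintervals.

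For the a priori bound I use the energy $\|\nabla y\|_{L^2}^2$, writing $y=y_1+iy_2$ as in \eqref{decouple} with the feedback terms removed. Pair the equation with $-\Delta y$ and take real parts. The skew term $i\alpha\Delta y$ gives $\mathrm{Re}\,\langle i\alpha\Delta y,-\Delta y\rangle=0$, so only $-\|\Delta y\|^2$ survives from the principal part. The potential contributes $Q\|\nabla y\|^2$. For the cubic term, H\"older, \eqref{sobolev-H1-L6} and Young give
$$
\bigl|\langle (1+i\beta)|y|^2y,\Delta y\rangle\bigr|\le \sqrt{1+\beta^2}\,\|y\|_{L^6}^3\|\Delta y\|_{L^2}\le \tfrac12\|\Delta y\|_{L^2}^2+c_0\|y\|_{H^1_0}^6 .
$$
Since $\|\Delta y\|_{L^2}^2\ge \Lambda_1\|\nabla y\|_{L^2}^2$, this yields
$$
\frac{d}{dt}\|\nabla y\|_{L^2}^2\le 2|Q|\,\|\nabla y\|_{L^2}^2+\|\nabla y\|_{L^2}^2\bigl(2c_0\|\nabla y\|_{L^2}^4-\Lambda_1\bigr),
$$
taking $\|\cdot\|_{H^1_0}=\|\nabla\cdot\|_{L^2}$ (otherwise adjust constants via Poincar\'e). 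The time differentiation is justified exactly as in the proof of Theorem~\ref{ra-sta-new}, since $y\in L^2(H^2)$ and $y_t\in L^2(L^2)$.

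Next comes a bootstrap identical to the one used for \eqref{priori-es-meas}. Let $\sigma=(\Lambda_1/(2c_0))^{1/4}$, $C_2=e^{|Q|T}$ and $\epsilon=\min\{\tfrac12,\sigma e^{-|Q|T}\}/2$. As long as $\|y(t)\|_{H^1_0}\le\sigma$, Gronwall gives $\|y(t)\|_{H^1_0}\le e^{|Q|t}\|y_0\|_{H^1_0}\le C_2\|y_0\|_{H^1_0}<\sigma$. A first-exit-time contradiction, using continuity of $t\mapsto\|y(t)\|_{H^1_0}$, shows that the bound holds on the whole maximal interval. The continuation criterion then rules out blow-up before $T$ (recall $T<1$), which gives \eqref{CCGLE-appex-y} and, via \eqref{linear-RZ}, the $L^2(0,T;H^2)$ regularity.

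The main obstacle is the cubic estimate in dimension $d=3$. Controlling $\||y|^2y\|_{L^2}$ by $\|y\|_{H^1_0}^3$ needs the $L^6$ embedding, which is exactly where $d\le3$ is used. One must also absorb only half of $\|\Delta y\|^2$, so that the remaining dissipation $\tfrac12\Lambda_1\|\nabla y\|^2$ dominates the sextic term in the small-data regime. This mirrors the derivation of \eqref{has-f} and should go through.
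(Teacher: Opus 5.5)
Your proof is correct, but it follows a different route from the paper's. The paper never uses local well-posedness plus continuation. It conjugates by $e^{-Qt}$ to get the linear estimate for $(1+i\alpha)\Delta+Q$ on the whole interval $[0,T]$, in the space $Y=C([0,T];H^1_0(\Omega))\cap L^2(0,T;H^2(\Omega)\cap H^1_0(\Omega))$. It then runs a single small-data contraction for the cubic term in a ball of radius $R_0$ in $Y$, and reads off $\|y\|_Y\le 2D_T\|y_0\|_{H^1_0(\Omega)}$ from the fixed-point identity.

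You instead take the local solution from Theorem~\ref{we-pos-th} with $\mathcal D=\emptyset$ and close an a priori $H^1_0$ energy bound by a first-exit-time bootstrap. This uses $\mathrm{Re}\langle i\alpha\Delta y,-\Delta y\rangle=0$, and lets half of the dissipation $\|\Delta y\|_{L^2}^2\ge\Lambda_1\|\nabla y\|_{L^2}^2$ absorb the sextic term for small data, as in the Lyapunov argument behind Theorem~\ref{ra-sta-new}.

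The paper's route is more self-contained. It needs no energy identity and no blow-up alternative, it builds uniqueness into the contraction, and it gives the full $Y$-norm bound, including $L^2(0,T;H^2)$, in one step. Your route needs the absolute continuity of $t\mapsto\|\nabla y(t)\|_{L^2}^2$, which holds since $y\in L^2_{\rm loc}(H^2)$ and $y_t\in L^2_{\rm loc}(L^2)$, together with the continuation criterion. In return you get explicit constants: $C_2=e^{|Q|T}$, and $\epsilon$ depending only on $\Lambda_1$, $\beta$, $C_S$ and $|Q|T$. The same argument also extends to any finite horizon, with only the factor $e^{|Q|T}$ growing.
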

\begin{proof}
Let
$$Y=C([0,T];H^1_0(\Omega))\cap L^2(0,T;H^2(\Omega)\cap H^1_0(\Omega)).$$
The linear estimate \eqref{linear-RZ} used in the proof of Theorem~\ref{we-pos-th} also applies to $(1+i\alpha)\Delta+Q$. Indeed, if $z$ satisfies the Dirichlet problem
$$z_t-(1+i\alpha)\Delta z-Qz=f,\qquad z(0)=z_0,$$
then $w(t)=e^{-Qt}z(t)$ satisfies the linear problem \eqref{linear-eq1} with the right-hand side $e^{-Qt}f$. Hence, for some $D_T\ge1$,
\begin{equation}\label{appendix-linear}
\|z\|_Y\le D_T\bigl(\|z_0\|_{H^1_0(\Omega)} +\|f\|_{L^2(0,T;L^2(\Omega))}\bigr).
\end{equation}

The Sobolev and cubic estimates in the proof of Theorem~\ref{we-pos-th} give
$$\||v|^2v\|_{L^2(0,T;L^2)}\le C_T\|v\|_Y^3$$
and
$$\||v|^2v-|w|^2w\|_{L^2(0,T;L^2)} \le C_T(\|v\|_Y^2+\|w\|_Y^2)\|v-w\|_Y.$$
For $v\in Y$, let $\Gamma(v)$ be the solution of the linear equation with the right-hand side $-(1+i\beta)|v|^2v$ and initial value $y_0$. Choose $R_0\in(0,1)$ so that $2D_TC_T R_0^2\le1/2$, and then choose $\epsilon\le R_0/(2D_T)$. By \eqref{appendix-linear}, $\Gamma$ maps the closed ball of radius $R_0$ in $Y$ into itself and, by the difference estimate above, is a strict contraction there. Banach's fixed-point theorem gives a solution on $[0,T]$. For the fixed point,
$$\|y\|_Y\le D_T\|y_0\|_{H^1_0}+D_TC_TR_0^2\|y\|_Y \le D_T\|y_0\|_{H^1_0}+\frac12\|y\|_Y,$$
so \eqref{CCGLE-appex-y} holds with $C_2=2D_T$. Uniqueness in the full class $Y$ follows exactly as in the proof of Theorem~\ref{we-pos-th}, by applying the difference estimate successively on sufficiently short subintervals.
\end{proof}

\noindent\textit{Proof of Remark \ref{stand-qrs}.} In this case, $E=\mathbb R^+$. The argument is exactly the same as in the proof of Theorem \ref{ra-sta-new}. We first conclude from the spectral inequality that there exists a constant $C_0=C_0\left( \Omega,r,\omega \right)\ge 1$ such that
\begin{equation}\label{has-f-app}
\begin{aligned} \frac{d}{dt}\mathcal{V}(t)
& \leq  2Q\mathcal{V}(t)-\frac{\lambda}{2} \mathcal{V}(t)+\|y(t)\|_{H^1_0}^2\left(8c_0^2C_0^2e^{2C_0\sqrt{\lambda}}\|y(t)\|_{H^1_0}^4-\frac{1}{8}\right),\quad \forall t\in (0,\widetilde T).
\end{aligned}
\end{equation}
Hence, if the following a priori estimate holds:
\begin{equation}\label{aprinew}
\|y(t)\|_{H_0^1} \le \sigma_\lambda := \left( \frac{1}{64c_0^2C_0^2 e^{2C_0\sqrt{\lambda}}} \right)^{\frac14}, \qquad \forall t\in[0,\widetilde{T}),
\end{equation}
one has
$$\|y(t)\|_{H_0^1}^2 \le 2C_0^2 e^{2C_0\sqrt{\lambda}} \lambda e^{\left(2Q-\frac{\lambda}{2}\right)t} \|y_0\|_{H_0^1}^2, \qquad \forall t\in[0,\widetilde{T}).$$
Clearly, there exists $c_1=c_1(\Omega,\omega,r,d,\beta)\ge C_0$ such that for all $\lambda\ge 1$,
$$2C_0^2 e^{2C_0\sqrt{\lambda}} \lambda \le e^{2c_1\sqrt{\lambda}}.$$
Moreover, if $\lambda>4Q$ and $y_0\in H_0^1(\Omega)$ satisfy $\|y_0\|_{H_0^1} \le \sigma_\lambda e^{-c_1\sqrt{\lambda}}$, the a priori estimate \eqref{aprinew} holds. Furthermore, $\widetilde T=+\infty$.

Finally, there exists $d_1=d_1(\Omega,\omega,r,d,\beta)\ge c_1$ so that
$$e^{-d_1\sqrt{\lambda}}\le \sigma_\lambda e^{-c_1\sqrt{\lambda}}, \qquad 2C_0 e^{C_0\sqrt{\lambda}} \lambda e^{c_1\sqrt{\lambda}}\le e^{d_1\sqrt{\lambda}}.$$
Therefore, under the condition that $\|y_0\|_{H_0^1}\le e^{-d_1\sqrt{\lambda}}$, we have
$$\|u(t)\|_{L^2}+\|y(t)\|_{H_0^1} \le e^{d_1\sqrt{\lambda}} e^{\left(Q-\frac{\lambda}{4}\right)t} \|y_0\|_{H_0^1}, \qquad \forall t\ge 0.$$
Let $\hat{\lambda}>\max\{1,Q\}$ and set $Q_+=\max\{Q,0\}$. Then we fix $\lambda=4Q_++4\hat{\lambda}$. Taking $\hat{\lambda}$ as the new decay rate, it is readily verified that the constant $C_1=2\sqrt{2}\,d_1$ verifies the conclusion.
\hfill$\square$\\[-2mm]

\noindent\textit{Proof of Remark \ref{cost-form}.} Let $C_1$ be the constant in Remark~\ref{stand-qrs}. Choose $H>2C_1$ sufficiently large such that $4H^2>\max\{1,Q\}$, and set
$$l_m=T(1-2^{1-m}),\quad \lambda_m=\frac{H^2 2^{2m}}{T^2},\qquad m\ge1.$$
Then
$$l_{m+1}-l_m=\frac{T}{2^m}, \qquad \sqrt{\lambda_m}=\frac{H2^m}{T}.$$
On each interval $[l_m,l_{m+1})$, we apply the feedback $K_{\lambda_m}$ given by Remark~\ref{stand-qrs}.

Assume that
\begin{equation}\label{remark-yo}
\|y_0\|_{H_0^1(\Omega)} \le e^{-2C_1H/T}.
\end{equation}
We claim by induction that
\begin{equation}\label{cost-induction}
\|y(l_m)\|_{H_0^1(\Omega)} \le e^{-C_1\sqrt{\lambda_m}}, \qquad m\ge1.
\end{equation}
Indeed, from \eqref{remark-yo}, \eqref{cost-induction} holds for $m=1$. Assume that it holds for some $m\ge1$. Remark~\ref{stand-qrs} yields
$$\begin{aligned}
\|y(l_{m+1})\|_{H_0^1}
&\le
e^{C_1\sqrt{\lambda_m}}
e^{-\lambda_m(l_{m+1}-l_m)}
\|y(l_m)\|_{H_0^1}\\
&\le
\exp\left(-\frac{H^2 2^m}{T}\right)
\le
\exp\left(-\frac{2C_1H2^m}{T}\right)
=
e^{-C_1\sqrt{\lambda_{m+1}}},
\end{aligned}$$
where the last inequality follows from $H>2C_1$. Thus \eqref{cost-induction} holds for every $m$, and the stabilization estimate can be successively restarted on all the intervals.

Moreover, iterating the endpoint estimate gives
$$\|y(l_m)\|_{H_0^1} \le \exp\left( -\frac{H(H-C_1)}{T}(2^m-2) \right) \|y_0\|_{H_0^1}.$$
Hence, for $t\in[l_m,l_{m+1})$,
$$\begin{aligned}
\|u(t)\|_{L^2}+\|y(t)\|_{H_0^1}
&\le
e^{C_1\sqrt{\lambda_m}}\|y(l_m)\|_{H_0^1}\le
\exp\left(
-\frac{H(H-2C_1)}{T}2^m+\frac{2H(H-C_1)}{T}
\right)
\|y_0\|_{H_0^1}.
\end{aligned}$$
Since $H>2C_1$, the exponential factor tends to zero and is bounded above by $e^{2C_1H/T}$. Hence $y(t)\to0$ in $H_0^1(\Omega)$ as $t\to T^-$ and $\|u\|_{L^\infty(0,T;L^2(\Omega))}\le e^{2C_1H/T}\|y_0\|_{H_0^1(\Omega)}$. Taking $L=2C_1H$ gives \eqref{classical-cost}.
\hfill$\square$
\end{appendices}


\begin{thebibliography}{00}

\bibitem{ASK} O.~M. Aamo, A. Smyshlyaev, M. Krstic, Boundary control of the linearized Ginzburg--Landau model of vortex shedding, \emph{SIAM J. Control Optim.} \textbf{43} (2005), 1953--1971.

\bibitem{JE} J. Apraiz, L. Escauriaza, Null-control and measurable sets, \emph{ESAIM Control Optim. Calc. Var.} \textbf{19} (2013), 239--254.

\bibitem{AEWZ} J. Apraiz, L. Escauriaza, G. Wang, C. Zhang, Observability inequalities and measurable sets, \emph{J. Eur. Math. Soc.} \textbf{16} (2014), 2433--2475.

\bibitem{AK} I.~S. Aranson, L. Kramer, The world of the complex Ginzburg--Landau equation, \emph{Rev. Modern Phys.} \textbf{74} (2002), 99--143.

\bibitem{Brezis} H. Br\'ezis, \emph{Functional Analysis, Sobolev Spaces and Partial Differential Equations}, Universitext, Springer, New York, 2011.

\bibitem{burq} N. Burq, I. Moyano, Propagation of smallness and control for heat equations, \emph{J. Eur. Math. Soc.} \textbf{25} (2023), 1349--1377.

\bibitem{CMM} N. Carre\~no, A. Mercado, R. Morales, Local null controllability of a cubic Ginzburg--Landau equation with dynamic boundary conditions, \emph{J. Evol. Equ.} \textbf{25} (2025), Paper No.~62, 31 pp.

\bibitem{CN} J.~M. Coron, H.~M. Nguyen, Null controllability and finite time stabilization for the heat equations with variable coefficients in space in one dimension via backstepping approach, \emph{Arch. Ration. Mech. Anal.} \textbf{225} (2017), 993--1023.

\bibitem{DFLZ} F. Dou, X. Fu, Z. Liao, X. Zhu, Global Carleman estimate and state observation problem for Ginzburg--Landau equation, \emph{SIAM J. Control Optim.} \textbf{61} (2023), 2981--2996.

\bibitem{EMZ} L. Escauriaza, S. Montaner, C. Zhang, Analyticity of solutions to parabolic evolutions and applications, \emph{SIAM J. Math. Anal.} \textbf{49} (2017), 4064--4092.

\bibitem{F} X. Fu, Null controllability for the parabolic equation with a complex principal part, \emph{J. Funct. Anal.} \textbf{257} (2009), 1333--1354.

\bibitem{FWYZ} X. Fu, G. Wang, H. Yu, X. Zhu, Observability from measurable sets for strongly coupled parabolic systems via single-component observation, arXiv:2604.13599, 2026.

\bibitem{GHMSXZ} L. Gagnon, A. Hayat, S. Marx, S. Xiang, C. Zhang, Quantitative Fredholm backstepping and rapid stabilization, arXiv:2605.17941, 2026.

\bibitem{HLP} V. Hern\'andez-Santamar\'ia, K. Le Balc'h, L. Peralta, Stability for the stochastic heat equation with multiplicative noise via finite-dimensional feedback, arXiv:2604.08683, 2026.

\bibitem{HW} S. Huang, G. Wang, M. Wang, Observability inequality, log-type Hausdorff content and heat equations, \emph{Comm. Math. Phys.} \textbf{407} (2026), Paper No.~33, 60 pp.

\bibitem{LL} J. Le~Rousseau, G. Lebeau, On Carleman estimates for elliptic and parabolic operators. Applications to unique continuation and control of parabolic equations, \emph{ESAIM Control Optim. Calc. Var.} \textbf{18} (2012), 712--747.

\bibitem{LR} G. Lebeau, L. Robbiano, Contr\^ole exact de l'\'equation de la chaleur, \emph{Comm. Partial Differential Equations} \textbf{20} (1995), 335--356.

\bibitem{LZ} G. Lebeau, E. Zuazua, Null-controllability of a system of linear thermoelasticity, \emph{Arch. Ration. Mech. Anal.} \textbf{141} (1998), 297--329.

\bibitem{LZ2} S. Luan, X. Zhou, Approximation of time optimal controls for Ginzburg--Landau equations with disturbance coefficients, \emph{SIAM J. Control Optim.} \textbf{62} (2024), 3195--3212.

\bibitem{M} L. Miller, A direct Lebeau--Robbiano strategy for the observability of heat-like semigroups, \emph{Discrete Contin. Dyn. Syst. Ser. B} \textbf{14} (2010), 1465--1485.

\bibitem{NguyenKdV} H. M. Nguyen, Rapid and finite-time boundary stabilization of a KdV system, \emph{SIAM J. Control Optim.} \textbf{64} (2026), 76--101.

\bibitem{PW} K.~D. Phung, G. Wang, An observability estimate for parabolic equations from a measurable set in time and its applications, \emph{J. Eur. Math. Soc.} \textbf{15} (2013), 681--703.

\bibitem{PWZ} K.~D. Phung, L.~J. Wang, C. Zhang, Bang-bang property for time optimal control of semilinear heat equation, \emph{Ann. Inst. H. Poincar\'e C Anal. Non Lin\'eaire} \textbf{31} (2014), 477--499.

\bibitem{RZ} L. Rosier, B.~Y. Zhang, Null controllability of the complex Ginzburg--Landau equation, \emph{Ann. Inst. H. Poincar\'e C Anal. Non Lin\'eaire} \textbf{26} (2009), 649--673.

\bibitem{ST} M.~C. Santos, T.~Y. Tanaka, An insensitizing control problem for the Ginzburg--Landau equation, \emph{J. Optim. Theory Appl.} \textbf{183} (2019), 440--470.

\bibitem{W} G. Wang, $L^\infty$-null controllability for the heat equation and its consequences for the time optimal control problem, \emph{SIAM J. Control Optim.} \textbf{47} (2008), 1701--1720.

\bibitem{WWZZ} G. Wang, M. Wang, C. Zhang, Y. Zhang, Observable set, observability, interpolation inequality and spectral inequality for the heat equation in $\mathbb{R}^n$, \emph{J. Math. Pures Appl.} \textbf{126} (2019), 144--194.

\bibitem{X1} S. Xiang, Small-time local stabilization of the two-dimensional incompressible Navier--Stokes equations, \emph{Ann. Inst. H. Poincar\'e C Anal. Non Lin\'eaire} \textbf{40} (2023), 1487--1511.

\bibitem{X2} S. Xiang, Quantitative rapid and finite time stabilization of the heat equation, \emph{ESAIM Control Optim. Calc. Var.} \textbf{30} (2024), Paper No.~40, 25 pp.

\bibitem{XXZMD} S. Xiang, Y. Xiao, C. Zhang, Quantitative rapid boundary stabilization via modal decomposition and its application to the Allen--Cahn equation, arXiv:2607.03031, 2026.

\bibitem{XXZLQ} S. Xiang, Y. Xiao, C. Zhang, Quantitative rapid stabilization for parabolic equations via the linear quadratic theory, arXiv:2606.09063, 2026.

\bibitem{XZ}
Y. Xiao, C. Zhang,
Rapid boundary stabilization of 1D nonlinear parabolic equations,
\emph{SIAM J. Control Optim.}, to appear.

\end{thebibliography}
\end{document}